\documentclass[11pt, reqno]{amsart}

\usepackage{amsthm,amssymb,amstext,amscd,amsfonts,amsbsy,amsrefs,amsxtra,latexsym,amsmath,xcolor,mathrsfs,fancybox,upgreek, soul,url}
\usepackage[english]{babel}
\usepackage[all,cmtip]{xy}
\usepackage[latin1]{inputenc}
\usepackage{cancel}
\usepackage[draft]{hyperref}
\usepackage{comment}
\usepackage{mdframed}
\allowdisplaybreaks
\usepackage{mathtools}
\usepackage{enumerate}
\usepackage{thmtools}
\usepackage{thm-restate}

\DeclarePairedDelimiter\abs{\lvert}{\rvert}%
\DeclarePairedDelimiter\norm{\lVert}{\rVert}%

\makeatletter
\let\oldabs\abs
\def\abs{\@ifstar{\oldabs}{\oldabs*}}
\let\oldnorm\norm
\def\norm{\@ifstar{\oldnorm}{\oldnorm*}}
\makeatother

\oddsidemargin = 0cm \evensidemargin = 0cm \textwidth = 6.5in

\newtheorem{theorem}{Theorem}
\newtheorem{lemma}[theorem]{Lemma}

\newtheorem{proposition}[theorem]{Proposition}
\newtheorem{conjecture}[theorem]{Conjecture}

\theoremstyle{definition}

\theoremstyle{remark}

\newtheorem*{remark}{Remark}

\numberwithin{theorem}{section}
\numberwithin{proposition}{section}
\numberwithin{lemma}{section}
\numberwithin{corollary}{section}
\numberwithin{equation}{section}
\numberwithin{conjecture}{section}

\newcommand{\Z}{\mathbb{Z}}
\newcommand{\R}{\mathbb{R}}
\newcommand{\C}{\mathbb{C}}

\def\H{\mathbb{H}}
\renewcommand{\pmod}[1]{\  \,  \left( \mathrm{mod} \,  #1 \right)}

\begin{document}

\title[Asymptotic equidistribution and convexity for Dyson partition ranks ]{Asymptotic equidistribution and convexity for Dyson partition ranks}

\author{Joshua Males}

\address{Mathematical Institute, University of Cologne, Weyertal 86-90, 50931 Cologne, Germany}
\email{jmales@math.uni-koeln.de}

\title{Asymptotic Equidistribution and Convexity for Partition Ranks
}


\maketitle

\begin{abstract}
	We study the Dyson rank function $N(r,t;n)$, the number of partitions with rank congruent to $r$ modulo $t$. We first show that it is monotonic in $n$, and then show that it equidistributed as $n \rightarrow \infty$. Using this result we prove a conjecture of Hou and Jagadeeson on the convexity of $N(r,t;n)$.

\end{abstract}

\section*{acknowledgements}
	The author would like to thank Kathrin Bringmann for helpful comments on previous versions of the paper, as well as Chris Jennings-Shaffer for useful conversations. The author would also like to thank the referee for many helpful comments.

\section{Introduction and statement of results}

A familiar statistic in combinatorics is the number of partitions of an integer $n$, denoted by $p(n)$. The function $p(n)$ has been studied extensively, giving rise to results such as the famous Ramanujan congruences \cite{RamanujanCongruence}. Of particular interest to the current paper is the asymptotic behaviour of the number of partitions, proven by Hardy and Ramanujan in \cite{hardy1918asymptotic}. They showed that as $n \rightarrow \infty$
\begin{equation*}
p(n) \sim \frac{1}{4 n \sqrt{3}} e^{2 \pi \sqrt{\frac{n}{6}}}.
\end{equation*}

Other statistics involving partitions have been introduced since, the most pertinent of which for us is the rank of a partition, defined to be the largest part minus the number of parts. We denote the number of partitions of $n$ with rank $m$ by $N(m,n)$. By standard combinatorial arguments it can be shown that the generating function of $N(m,n)$ is given by (see equation 7.2 of \cite{garvan1988new} for example)
\begin{equation*}
R\left( \zeta; q  \right) \coloneqq \sum_{\substack{n \geq 0 \\ m \in \Z}} N(m,n) \zeta^m q^n = \sum_{n \geq 0} \frac{q^{n^2}}{\left( \zeta q , \zeta^{-1} q; q \right)_n},
\end{equation*}
where $\zeta \coloneqq e^{2 \pi i z}$, $q \coloneqq e^{2 \pi i \tau}$ with $\tau \in \H$ the upper half plane, and $(a;q)_n \coloneqq \prod_{j = 0}^{n-1} (1-aq^j)$. Further, to ease notation we set $(a_1 , a_2; q)_n \coloneqq (a_1;q)_n (a_2;q)_n$. First introduced by Dyson in \cite{DysonRank} as an attempt to describe the Ramanujan congruences combinatorially, the rank statistic has a storied history. For example, we have that
\begin{equation*}
R(-1;q) = 1 + \sum_{n \geq 1} \frac{q^{n^2}}{(1+q)^2 (1+q^2)^2 \cdots (1+q^n)^2}, 
\end{equation*}
which is the famous mock theta function $f(q)$, defined by Ramanujan and Watson in the early twentieth century. 

As a further refinement of $N(m,n)$ we let $N(r,t;n)$ be the number of partitions of $n$ with rank congruent to $r$ modulo $t$. It is well-known that for nonnegative integers $r,t$ we have the following equation that relates the generating function for $N(r,t;n)$ to the generating functions of $p(n)$ and $N(m,n)$ (see e.g. Section 14.3.3 of \cite{bringmann2017harmonic}) 
\begin{equation}\label{Proposition: rewriting N(r,t;n) as p(n) + N(m,n)}
\sum_{n \geq 0} N(r,t;n) q^n =  \frac{1}{t} \left[ \sum_{n \geq 0} p(n) q^n + \sum_{j = 1}^{t-1} \zeta_t^{-rj} R(\zeta_t^j ; q)   \right],
\end{equation}
where $\zeta_t \coloneqq e^{2 \pi i/t}$. 

In \cite{bringmann2010dyson} it was remarked that the results therein may be employed to obtain asymptotics of $N(r,t;n)$. This question was explored by Bringmann in \cite{bringmann2009asymptotics} for odd $t$, via use of the circle method. However, while the formulae obtained therein are stronger than our asymptotics, the present paper requires less strict results and hence we have somewhat shorter proofs. While the theorem we present can be concluded from the results of Bringmann in \cite{bringmann2009asymptotics} for odd $t$, we give results for all $t \geq 2$. We prove the following result.

\begin{restatable}{theorem}{theoremMain}\label{Theorem main}
	For fixed $0 \leq r < t$ and $t \geq 2$ we have that
	\begin{equation*}
	N(r,t;n) \sim \frac{1}{t} p(n) \sim \frac{1}{4 tn \sqrt{3}} e^{2 \pi \sqrt{\frac{n}{6}}}
	\end{equation*}
	as $n \rightarrow \infty$. Hence for fixed $t$ the number of partitions of rank congruent to $r$ modulo $t$ is equidistributed in the limit.
\end{restatable}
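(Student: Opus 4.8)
The plan is to read off the main term directly from the decomposition \eqref{Proposition: rewriting N(r,t;n) as p(n) + N(m,n)} and to show that every other piece is negligible. Setting $A_j(n) \coloneqq [q^n] R(\zeta_t^j;q)$ for $1 \le j \le t-1$, that identity becomes
\begin{equation*}
N(r,t;n) = \frac{1}{t}\, p(n) + \frac{1}{t}\sum_{j=1}^{t-1} \zeta_t^{-rj} A_j(n).
\end{equation*}
The second asymptotic equivalence, $\frac{1}{t}p(n) \sim \frac{1}{4tn\sqrt{3}} e^{2\pi\sqrt{n/6}}$, is nothing but the Hardy--Ramanujan formula recalled above. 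Hence the whole theorem reduces to the single estimate
\begin{equation*}
A_j(n) = o\bigl(p(n)\bigr) \qquad (1 \le j \le t-1),
\end{equation*}
since then the triangle inequality applied to the finite sum over $j$ gives $N(r,t;n) = \frac{1}{t}p(n)\bigl(1 + o(1)\bigr)$. Everything therefore rests on bounding the coefficients of the rank generating function specialised at the nontrivial $t$-th roots of unity.

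To control $A_j(n)$ I would analyse $R(\zeta_t^j;q)$ as $q = e^{-\epsilon} \to 1^-$. The key observation is that the factor $1/(q;q)_\infty$ sitting inside $R$ already grows at the full rate $e^{\pi^2/(6\epsilon)}$ responsible for $p(n)$, so any saving has to come from cancellation in the accompanying theta/Appell--Lerch part. Using a standard representation of the shape
\begin{equation*}
R(\zeta;q) = \frac{1-\zeta}{(q;q)_\infty}\sum_{n\in\Z}\frac{(-1)^n q^{n(3n+1)/2}}{1-\zeta q^n},
\end{equation*}
one evaluates the $\epsilon\to 0^+$ behaviour by Poisson summation (equivalently, via the modular transformation of the harmonic Maass form completing $R(\zeta_t^j;\cdot)$). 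Writing $\zeta_t^j = e^{2\pi i a/c}$ with $\gcd(a,c)=1$ and $c \ge 2$, one finds that the exponential rate drops to $\frac{\pi^2}{6}\bigl(1 - \tfrac{3a(c-a)}{c^2}\bigr)$, which for $c=2$ matches the classical growth $e^{\pi\sqrt{n/6}}$ of the coefficients of the mock theta function $f(q) = R(-1;q)$, and which for every admissible pair is strictly less than $\frac{\pi^2}{6}$ because $0 < a < c$. For the theorem even the precise constant is unnecessary: it suffices to produce some $\delta = \delta(t) > 0$ with $R(\zeta_t^j; e^{-\epsilon}) = O\bigl(e^{(\pi^2/6 - \delta)/\epsilon}\bigr)$ as $\epsilon \to 0^+$.

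Given such a bound, a routine circle-method estimate finishes the job: by Cauchy's formula on the circle $|q| = e^{-\epsilon}$ and optimising $\epsilon$ at the scale $n^{-1/2}$ one obtains $A_j(n) = O\bigl(e^{(2\pi/\sqrt6 - \delta')\sqrt n}\bigr) = o(p(n))$. I expect the main obstacle to be the uniformity of the bound over the \emph{entire} circle, not merely near the real point $q = e^{-\epsilon}$: one must verify that $q=1$ is the genuinely dominant singularity and estimate the arcs around the remaining roots of unity $e^{2\pi i h/k}$, whose rates scale like $\frac{\pi^2}{6k^2}$ and are thus subdominant for $k \ge 2$. This minor-arc bookkeeping --- including the behaviour near $q \to -1$ and the poles of $R(\zeta_t^j;q)$ at roots of unity on $|q|=1$ --- is exactly where the even $t$ case requires the extra care foreshadowed in the introduction; but since only an $o(p(n))$ statement is sought, crude bounds suffice and no sharp asymptotic for $A_j(n)$ is needed.
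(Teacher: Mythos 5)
Your reduction to the single estimate $A_j(n)=o(p(n))$ is fine, and your description of the radial behaviour of $R(\zeta_t^j;e^{-\epsilon})$ is consistent with what is true (and with what the paper proves at the real point). The gap is the bridge between them: the claim that ``a routine circle-method estimate'' with ``crude bounds'' on the remaining arcs ``finishes the job.'' Cauchy's formula on $|q|=e^{-\epsilon}$ requires a bound of the form $|R(\zeta_t^j;q)|\ll e^{(\pi^2/6-\delta)/\epsilon}$ \emph{uniformly on the whole circle}, and crude (absolute-value) bounds demonstrably cannot produce any such saving near $q=1$. Indeed, taking absolute values termwise, the Appell--Lerch factor is only polynomially large in $1/\epsilon$, while the eta-quotient obeys $|1/(q;q)_\infty|\le 1/(|q|;|q|)_\infty \asymp \epsilon^{1/2}e^{\pi^2/(6\epsilon)}$; moreover on the inner part of the major arc, $q=e^{-\epsilon+i\theta}$ with $|\theta|\le\epsilon^{3/2}$, the eta transformation shows $|1/(q;q)_\infty|$ genuinely has size $e^{\frac{\pi^2}{6}\mathrm{Re}\left(\frac{1}{\epsilon-i\theta}\right)}\ge e^{\frac{\pi^2}{6}\left(\frac{1}{\epsilon}-1\right)}$ up to polynomial factors, i.e.\ no exponential saving at all. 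Feeding that into Cauchy's formula with $\epsilon\asymp n^{-1/2}$ yields a bound of size roughly $n^{-1/2}\log n\, e^{2\pi\sqrt{n/6}}$, which is \emph{larger} than $p(n)\asymp n^{-1}e^{2\pi\sqrt{n/6}}$, so one cannot conclude $A_j(n)=o(p(n))$ this way. To get the required saving one must propagate the delicate cancellation between the eta growth and the Appell--Lerch sum uniformly in $\theta$ over the major arc, and run a Farey dissection near the other roots of unity: this is exactly the mock-modular transformation theory plus uniform Mordell-integral estimates that constitute Bringmann's circle-method paper, which at the time existed only for odd $t$. Dismissing it as ``minor-arc bookkeeping'' assumes precisely what needs to be proved.

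The paper's proof is built to avoid this difficulty entirely, and the ingredient you are missing is monotonicity. It first proves (Sections 3--4, refining Chan--Mao's results on $N(m,n)$) that $N(r,t;n)\ge N(r,t;n-1)$ for all $n\ge M=\max(2r+25,2(t-r)+25)$, and then invokes Ingham's Tauberian theorem: for a power series with weakly increasing non-negative coefficients, the asymptotic of the coefficients follows from the asymptotic of the series at the \emph{single} real point $q=e^{-\epsilon}$, $\epsilon\to 0^+$. That one-point asymptotic is obtained by writing $R(\zeta_t^j;q)$ in terms of the level three Appell function $A_3\left(\tfrac{j}{t},-\tau;\tau\right)$ and showing via Zwegers' transformations and the Mordell-integral bound that $A_3\to 0$ (Theorem 5.1), so the generating function behaves like $\frac{1}{t\sqrt{2\pi}}\epsilon^{1/2}e^{\pi^2/(6\epsilon)}$. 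Without either the monotonicity input (paper's route) or genuine full-circle estimates (your route), radial asymptotics alone do not determine coefficient asymptotics, so as written your argument does not close.
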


Recently, in \cite{bessenrodt2016maximal} Ono and Bessenrodt showed that the partition function satisfies the following convexity result. If $a,b \geq 1$ and $a+b \geq 9$ then 
\begin{equation*}
p(a) p(b) > p(a+b).
\end{equation*}
A natural question to ask is then: does $N(r,t;n)$ satisfy a similar property? In \cite{hou2018dyson} Hou and Jagadeesan provide an answer if $t =3$. They showed that for $0 \leq r \leq 2$ we have
\begin{equation*}
N(r,3;a)N(r,3;b) > N(r,3;a+b)
\end{equation*}
for all $a,b$ larger than some specific bound. Further, at the end of the same paper, the authors offer the following conjecture on a more general convexity result.

\begin{conjecture}\label{Conjecture to be proven}
	For $0 \leq r < t$ and $t \geq 2$ then
	\begin{equation*}
	N(r,t;a) N(r,t;b) > N(r,t;a+b)
	\end{equation*}
	for sufficiently large $a$ and $b$.
\end{conjecture}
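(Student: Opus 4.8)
The plan is to deduce the conjecture directly from Theorem~\ref{Theorem main}, converting the multiplicative inequality into an additive one by passing to logarithms. Since Theorem~\ref{Theorem main} gives $N(r,t;n) \sim \tfrac{1}{4tn\sqrt 3} e^{2\pi\sqrt{n/6}}$, in particular $N(r,t;n) > 0$ for all sufficiently large $n$, and
$$\log N(r,t;n) = \frac{2\pi}{\sqrt 6}\sqrt{n} - \log n - \log\!\big(4t\sqrt 3\big) + o(1) \qquad (n \to \infty).$$
Writing $L(n) \coloneqq \log N(r,t;n)$, the desired inequality is equivalent to $L(a) + L(b) - L(a+b) > 0$, and substituting the expansion above yields
$$L(a) + L(b) - L(a+b) = \frac{2\pi}{\sqrt 6}\big(\sqrt a + \sqrt b - \sqrt{a+b}\big) + \big(\log(a+b) - \log a - \log b\big) - \log\!\big(4t\sqrt 3\big) + o(1),$$
where the $o(1)$ collects the three error terms coming from $a$, $b$, and $a+b$.

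The heart of the argument is then a pair of elementary estimates controlling the two bracketed expressions. Assuming without loss of generality that $a \le b$, I would first show that the exponential gain is controlled from below by the smaller variable, namely $\sqrt a + \sqrt b - \sqrt{a+b} \ge \tfrac12\sqrt a$; this follows from $\sqrt b - \sqrt{a+b} = -a/(\sqrt b + \sqrt{a+b}) \ge -a/(2\sqrt b) \ge -\tfrac12\sqrt a$. Secondly, I would note that the logarithmic term does not blow up even when $b$ is enormous compared to $a$, since $\log(a+b) - \log a - \log b = \log(1 + a/b) - \log a \ge -\log a$. Combining these, once $a = \min(a,b)$ is large enough that the total error term is bounded by, say, $1$ in absolute value, we obtain
$$L(a) + L(b) - L(a+b) \ge \frac{\pi}{\sqrt 6}\sqrt a - \log a - \log\!\big(4t\sqrt 3\big) - 1.$$

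It then remains to observe that the right-hand side is a function of the single variable $a = \min(a,b)$ that tends to $+\infty$, because $\sqrt a$ dominates $\log a$ and $t$ is fixed; hence it is positive once $\min(a,b)$ exceeds some threshold $N = N(t)$, which proves the conjecture for all $a,b \ge N$. I expect the main obstacle to be the uniformity of the error term: because only ``sufficiently large $a$ and $b$'' are assumed, one must take care in the regime where one of the two variables is far larger than the other, where a naive appeal to ``the exponential beats the polynomial'' is unavailable for the large variable. The two estimates above are designed precisely to absorb this difficulty, reducing the whole expression to a clean one-variable lower bound in $\min(a,b)$; the $o(1)$ is then tamed by fixing a tolerance before the threshold $N$ is chosen.
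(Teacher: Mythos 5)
Your proposal is correct and follows essentially the same route as the paper: both deduce the conjecture directly from Theorem \ref{Theorem main} by comparing $N(r,t;a)N(r,t;b)$ with $N(r,t;a+b)$ via the asymptotic formula, the key point being that $\sqrt a+\sqrt b-\sqrt{a+b}$ grows and the exponential gain beats the polynomial loss. Your logarithmic reformulation with the explicit bounds $\sqrt a+\sqrt b-\sqrt{a+b}\geq\tfrac12\sqrt{\min(a,b)}$ and $\log(a+b)-\log a-\log b\geq-\log\min(a,b)$ is in fact more careful than the paper's terse ratio computation, since it handles the uniformity of the two-variable limit (the regime $b\gg a$) explicitly rather than implicitly.
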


As a simple consequence of Theorem \ref{Theorem main} we prove the following theorem.

\begin{restatable}{theorem}{theoremConjecture}\label{Theorem: conjecture is true}
	
	Conjecture \ref{Conjecture to be proven} is true.
\end{restatable}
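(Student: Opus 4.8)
The plan is to deduce the conjecture directly from the asymptotic in Theorem \ref{Theorem main}, reducing everything to a single elementary inequality. Since all three of $a$, $b$, and $a+b$ tend to infinity once $\min(a,b)\to\infty$, substituting $N(r,t;n)\sim\frac{1}{4tn\sqrt3}e^{2\pi\sqrt{n/6}}$ into each factor gives
\[
\frac{N(r,t;a)\,N(r,t;b)}{N(r,t;a+b)}\ \sim\ \frac{(a+b)\sqrt3}{12t\,ab}\,\exp\!\left(\frac{2\pi}{\sqrt6}\bigl(\sqrt a+\sqrt b-\sqrt{a+b}\bigr)\right),
\]
so it suffices to show that the right-hand side tends to infinity as $\min(a,b)\to\infty$; in particular it eventually exceeds $1$, which is exactly the claimed inequality. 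It is worth noting why the convexity result of Bessenrodt and Ono does not suffice on its own: the same computation gives $N(r,t;a)N(r,t;b)/N(r,t;a+b)\sim \frac1t\,p(a)p(b)/p(a+b)$, and since $t\ge2$ the bound $p(a)p(b)>p(a+b)$ only forces this to exceed $\frac1t<1$. Thus one genuinely needs the \emph{growth}, not merely the positivity, of the exponent.

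The key step is therefore the estimate on $\sqrt a+\sqrt b-\sqrt{a+b}$. For fixed $a$ the function $b\mapsto\sqrt a+\sqrt b-\sqrt{a+b}$ is increasing, since its derivative $\frac{1}{2\sqrt b}-\frac{1}{2\sqrt{a+b}}$ is positive; hence for $b\ge a$ its value is at least its value at $b=a$, namely $(2-\sqrt2)\sqrt a$. By symmetry,
\[
\sqrt a+\sqrt b-\sqrt{a+b}\ \ge\ (2-\sqrt2)\sqrt{\min(a,b)},
\]
which grows without bound. Since moreover $\frac{a+b}{ab}=\frac1a+\frac1b\ge\frac{1}{\min(a,b)}$, the displayed quantity is bounded below by a constant multiple of $\frac{1}{\min(a,b)}\exp\!\bigl(c\sqrt{\min(a,b)}\bigr)$ with $c=\frac{2\pi}{\sqrt6}(2-\sqrt2)>0$, so the exponential factor overwhelms the polynomial decay and the ratio diverges.

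To turn this into a statement uniform in the two free variables I would avoid the three-variable $\sim$ and instead fix $\varepsilon=\tfrac12$ in Theorem \ref{Theorem main}: there is $N_0$ so that $\frac12 A(n)\le N(r,t;n)\le\frac32 A(n)$ for $n\ge N_0$, where $A(n)=\frac{1}{4tn\sqrt3}e^{2\pi\sqrt{n/6}}$. For $a,b\ge N_0$ this yields
\[
\frac{N(r,t;a)\,N(r,t;b)}{N(r,t;a+b)}\ \ge\ \frac16\,\frac{A(a)A(b)}{A(a+b)}\ =\ \frac{\sqrt3}{72\,t}\,\frac{a+b}{ab}\exp\!\left(\frac{2\pi}{\sqrt6}\bigl(\sqrt a+\sqrt b-\sqrt{a+b}\bigr)\right),
\]
and by the previous paragraph the right-hand side exceeds $1$ once $\min(a,b)$ is large enough. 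Choosing $B\ge N_0$ accordingly proves the inequality for all $a,b\ge B$, establishing Conjecture \ref{Conjecture to be proven}.

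The only real obstacle is this bookkeeping: passing from the single-variable asymptotic of Theorem \ref{Theorem main} to a bound uniform in both $a$ and $b$, which is why I replace $\sim$ by explicit two-sided bounds before forming the quotient. Everything else rests on the elementary observation that $\sqrt a+\sqrt b>\sqrt{a+b}$, quantified so that the exponential gain defeats the polynomial loss.
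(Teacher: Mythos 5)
Your proof is correct and takes essentially the same route as the paper: substitute the asymptotic of Theorem \ref{Theorem main} into the ratio $N(r,t;a)N(r,t;b)/N(r,t;a+b)$ and observe that the exponential factor $\exp\left(\tfrac{2\pi}{\sqrt{6}}\left(\sqrt{a}+\sqrt{b}-\sqrt{a+b}\right)\right)$ dominates the polynomial decay. Your write-up is in fact more careful than the paper's, which simply asserts a three-variable $\sim$ and the inequality $>1$ in one display; your replacement of the asymptotic by two-sided bounds for $n \geq N_0$, together with the explicit estimate $\sqrt{a}+\sqrt{b}-\sqrt{a+b} \geq (2-\sqrt{2})\sqrt{\min(a,b)}$, supplies precisely the uniformity-in-two-variables bookkeeping that the paper leaves implicit.
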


\begin{remark}
	We note that unlike in \cite{hou2018dyson} our proof of Theorem \ref{Theorem: conjecture is true} does not give an explicit lower bound on $a$ and $b$. To yield such a bound one could employ similar techniques to those in \cite{hou2018dyson}, relying on the asymptotics found in \cite{bringmann2009asymptotics}. However, since \cite{bringmann2009asymptotics} gives results only for odd $t$ one could only find such bounds directly for odd $t$. Further, to find an explicit bound for general $t$ is a difficult problem.
\end{remark}

The paper is organised as follows. In Section \ref{Section: prelims} we give some preliminary results needed for the rest of the paper. We begin by showing the strict monotonicity in $n$ of $N(m,n)$ in Section \ref{Section: strict monotonicity of N(m,n)} which then allows us to prove a monotonicity result of $N(r,t;n)$ in Section \ref{Section: monotonicty of N(r,t;n)}. Section \ref{Section: Asymptotic behaviour of Appell} serves to find the asymptotic behaviour of the level three Appell function. In Section \ref{Section: proof of main theorem} we prove Theorem \ref{Theorem main}. We are then able to conclude Theorem \ref{Theorem: conjecture is true} in Section \ref{Section: proof of conjecture}.

\section{Preliminaries}\label{Section: prelims}

\subsection{Appell functions}\label{Section: prelims Appell functions}
We make extensive use of properties of Appell functions in Section \ref{Section: Asymptotic behaviour of Appell}, and so here we recall relevant results without proof. In his celebrated thesis \cite{zwegers2008mock} Zwegers studied the Appell function 
\begin{equation*}
\mu(u,z ; \tau) \coloneqq \frac{ e^{\pi i u}}{\vartheta(z; \tau)} \sum_{n \in \Z} \frac{ (-1)^n e^{\pi i (n^2 + n) \tau} e^{2 \pi i n z   }}{1 - e^{2 \pi i n \tau} e^{2 \pi i u}},
\end{equation*}
where
\begin{equation*}
\vartheta(z; \tau) \coloneqq \sum_{n \in \frac{1}{2} + \Z} e^{\pi i n^2 \tau + 2 \pi i n (z + \frac{1}{2})},
\end{equation*}
with $z \in \C$, is a Jacobi theta function. It is well-known that $\vartheta$ satisfies the following two transformation formulae (see e.g. \cite{mumford2007tata});

\begin{equation*}
\vartheta(z + 1; \tau) = - \vartheta\left( z; \tau \right),
\end{equation*}
and
\begin{equation*}
\vartheta(z; \tau) = \frac{i}{\sqrt{-i \tau}} e^{ \frac{- \pi i z^2}{\tau}} \vartheta\left( \frac{z}{\tau} ; -\frac{1}{\tau} \right).
\end{equation*}
Zwegers used this to then show that $\mu$ satisfies

\begin{equation*}
\mu(u+1,v ;\tau) = - \mu(u,v;\tau),
\end{equation*}
and
\begin{equation*}
\mu(u,v;\tau) = \frac{-1}{\sqrt{-i \tau}} e^{\frac{\pi i (u - v)^2}{\tau}} \mu\left( \frac{u}{\tau} , \frac{v}{\tau} ; -\frac{1}{\tau}  \right) + \frac{1}{2i} h(u-v; \tau),
\end{equation*} 
where $h$ is the Mordell integral
\begin{equation*}
h(z; \tau) \coloneqq \int_{\R} \frac{e^{\pi i \tau x^2 - 2 \pi z x}}{\cosh(\pi x)} dx.
\end{equation*}

Further, Zwegers showed the following two transformation properties of $h$;

\begin{equation}\label{Equation: transformation h(z;tau)}
h\left( z; \tau \right) = \frac{1}{\sqrt{-i \tau}} e^{ \frac{\pi i z^2}{\tau}} h\left( \frac{z}{\tau} ; -\frac{1}{\tau} \right),
\end{equation}
and
\begin{equation}\label{Equation: shifting h}
h(z; \tau) + e^{- 2 \pi i z - \pi i \tau} h(z+\tau; \tau) = 2e^{- \pi i z - \frac{\pi i \tau}{4}}.
\end{equation}

In more recent work \cite{zwegers2010multivariable} Zwegers introduced Appell functions of higher level and showed that they also exhibit similar transformation formulae. We define the level $\ell$ Appell function by
\begin{equation*}
A_\ell (u,v;\tau) \coloneqq e^{\pi i \ell u} \sum_{n \in \Z} \frac{ (-1)^{\ell n}  q^{\frac{\ell n(n+1)}{2}} e^{2 \pi i n v} }{1 - e^{2 \pi i u}q^n}.
\end{equation*}
Then it is shown that
\begin{equation*}
\begin{split}
A_\ell (u,v;\tau) & = \sum_{k = 0}^{\ell -1} e^{2 \pi i u k} A_1 \left( \ell u, v + k \tau + \frac{\ell-1}{2} ; \ell \tau \right) \\
& = \sum_{k = 0}^{\ell - 1} e^{2 \pi i u k} \vartheta \left( v + k \tau + \frac{\ell-1}{2}; \ell \tau \right) \mu \left( \ell u, v + k \tau + \frac{\ell-1}{2}; \ell \tau \right),
\end{split}
\end{equation*} 
and so $A_\ell$ inherits transformation properties from $\vartheta$ and $\mu$.

\subsection{A bound on $h$}
In Section \ref{Section: Asymptotic behaviour of Appell} we investigate asymptotic properties of $h$, and make use of a bound given in \cite{2018arXiv180511319J}. Proposition 5.2 therein reads as follows.
\begin{proposition}\label{Proposition: bounding h(z;tau)}
	Let $\kappa$ be a positive integer, $\alpha, \beta \in \R$ with $|\alpha| < \frac{1}{2}$ and $-\frac{1}{2} \leq \beta < \frac{1}{2}$, and $z \in \C$ with $\operatorname{Re}(z) >0$. Then
	\begin{equation*}
	\abs{ h\left( \frac{i \beta}{\kappa z} + \alpha ; \frac{i}{\kappa z}  \right) }  \leq 
	\begin{dcases}
	\abs{\sec(\pi \beta) } \kappa^{\frac{1}{2}} \operatorname{Re}\left(\frac{1}{z} \right)^{- \frac{1}{2}} e^{ -\frac{\pi \beta^2}{\kappa} \operatorname{Re}\left(\frac{1}{z}\right) + \pi \kappa \alpha^2 \operatorname{Re}\left( \frac{1}{z} \right)^{-1}  } & \text{ if } \beta \neq - \frac{1}{2}, \\
	\left( 1 + \kappa^{\frac{1}{2}} \operatorname{Re}\left( \frac{1}{z} \right)^{-\frac{1}{2}}  \right) e^{- \frac{\pi}{4\kappa} \operatorname{Re}\left( \frac{1}{z} \right)} & \text{ if } \beta = - \frac{1}{2}.
	\end{dcases}
	\end{equation*}
\end{proposition}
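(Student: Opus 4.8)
The plan is to substitute the prescribed arguments into the integral defining $h$, complete the square to extract the dominant exponential factor, and then translate the contour so that the Gaussian exponent becomes real; the dichotomy in the statement reflects exactly whether or not this translation sweeps across a pole of $1/\cosh(\pi x)$. Writing $w \coloneqq \frac{i\beta}{\kappa z} + \alpha$ and $\sigma \coloneqq \frac{i}{\kappa z}$, the first computation I would carry out is
\[
\pi i \sigma x^2 - 2\pi w x = -\frac{\pi}{\kappa z}(x + i\beta)^2 - 2\pi \alpha x - \frac{\pi \beta^2}{\kappa z},
\]
from which
\[
h(w;\sigma) = e^{-\frac{\pi \beta^2}{\kappa z}} \int_{\R} \frac{e^{-\frac{\pi}{\kappa z}(x+i\beta)^2 - 2\pi \alpha x}}{\cosh(\pi x)}\, dx .
\]
Since $\abs{e^{-\pi \beta^2/(\kappa z)}} = e^{-\frac{\pi \beta^2}{\kappa} \operatorname{Re}(1/z)}$, and $\beta^2 = \tfrac14$ when $\beta = -\tfrac12$, this already produces the common exponential prefactor appearing in both branches. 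It remains to estimate the integral in each case.

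For $-\tfrac12 < \beta < \tfrac12$ the poles of $1/\cosh(\pi x)$, lying at $x \in i(\tfrac12 + \Z)$, all have imaginary part outside $(-\tfrac12,\tfrac12)$, so I may shift the contour to $\R - i\beta$ without crossing one. Parametrising $x = t - i\beta$ makes the Gaussian argument real, and the elementary identity $\abs{\cosh(\pi(t-i\beta))}^2 = \cos^2(\pi\beta) + \sinh^2(\pi t) \geq \cos^2(\pi\beta)$ bounds the hyperbolic denominator below, contributing at most $\abs{\sec(\pi\beta)}$. Estimating the remaining real Gaussian $\int_{\R} e^{-\frac{\pi}{\kappa}\operatorname{Re}(1/z)\, t^2 - 2\pi\alpha t}\, dt$ by completing the square in $t$ then yields $\kappa^{1/2}\operatorname{Re}(1/z)^{-1/2} e^{\pi\kappa\alpha^2 \operatorname{Re}(1/z)^{-1}}$, which is precisely the first case.

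The boundary case $\beta = -\tfrac12$ is the main obstacle: here the shift to $\R + \tfrac{i}{2}$ runs straight through the simple pole of $1/\cosh(\pi x)$ at $x = \tfrac{i}{2}$, and $\abs{\sec(\pi\beta)}$ diverges. I would instead deform past the pole and record the result as a principal-value integral along $\R + \tfrac{i}{2}$ plus $i\pi$ times the residue at $x = \tfrac{i}{2}$. Using $\tfrac{d}{dx}\cosh(\pi x) = \pi \sinh(\pi x)$ one finds this residue term equals $e^{-\pi i \alpha}$ up to the prefactor, hence has modulus exactly $1$, which is the source of the additive $1$ in the bound. On $\R + \tfrac{i}{2}$ one has $\cosh(\pi x) = i\sinh(\pi t)$, and pairing $t$ with $-t$ rewrites the principal value as $\int_0^\infty \frac{F(t) - F(-t)}{\sinh(\pi t)}\, dt$ with $F(t) = e^{-\frac{\pi}{\kappa z}t^2 - 2\pi\alpha t}$; the inequality $\abs{\sinh(2\pi\alpha t)} \leq \sinh(\pi t)$ for $t > 0$, valid because $\abs{\alpha} < \tfrac12$, both tames the non-integrable $1/\sinh$ singularity at the origin and leaves a Gaussian integral equal to $\kappa^{1/2}\operatorname{Re}(1/z)^{-1/2}$. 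Summing the residue and principal-value contributions gives the second case. The delicate points I expect are fixing the orientation (and hence the exact factor $i\pi$) of the residue, and justifying the symmetrisation that makes the boundary integral converge.
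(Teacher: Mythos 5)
Your proposal is correct, but there is nothing in the paper to compare it against: the paper does not prove Proposition \ref{Proposition: bounding h(z;tau)} at all, it quotes the statement verbatim from Proposition 5.2 of \cite{2018arXiv180511319J} and uses it as a black box. Your argument is the natural proof, and the steps you flagged as delicate do check out. For $-\tfrac12<\beta<\tfrac12$: the strip between $\R$ and $\R-i\beta$ contains no poles, the identity $\abs{\cosh(\pi(t-i\beta))}^2=\cos^2(\pi\beta)+\sinh^2(\pi t)$ is correct, and completing the square gives $\int_{\R}e^{-\frac{\pi}{\kappa}\operatorname{Re}(1/z)t^2-2\pi\alpha t}\,dt=\kappa^{1/2}\operatorname{Re}\left(\tfrac1z\right)^{-1/2}e^{\pi\kappa\alpha^2\operatorname{Re}(1/z)^{-1}}$, exactly the first branch. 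For $\beta=-\tfrac12$: indenting below the pole at $x=\tfrac i2$ (the region swept between $\R$ and the indented contour is pole-free) produces $\mathrm{PV}\int_{\R+i/2}+\pi i\operatorname{Res}_{x=i/2}$; since $\operatorname{Res}_{x=i/2}\,1/\cosh(\pi x)=1/(\pi i)$ and the numerator there equals $e^{-\pi i\alpha}$, the residue contribution has modulus exactly $1$ -- and note that whichever side you indent on, only the sign of the $\pi i\operatorname{Res}$ term changes, so the orientation question you raised cannot affect the bound. The symmetrisation is legitimate because $F(t)-F(-t)=-2e^{-\frac{\pi}{\kappa z}t^2}\sinh(2\pi\alpha t)=O(t)$ near $t=0$, and $\abs{\sinh(2\pi\alpha t)}\leq\sinh(\pi t)$ for $\abs{\alpha}\leq\tfrac12$, $t>0$, leaves $2\int_0^\infty e^{-\frac{\pi}{\kappa}\operatorname{Re}(1/z)t^2}\,dt=\kappa^{1/2}\operatorname{Re}\left(\tfrac1z\right)^{-1/2}$, giving the second branch. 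The only step you should make explicit in a final write-up is the justification of the contour shifts themselves: the vertical segments at $\operatorname{Re}(x)=\pm R$ vanish as $R\to\infty$ because $\operatorname{Re}(1/z)>0$ forces Gaussian decay of the numerator uniformly in the strip, while $\abs{\cosh(\pi x)}^2=\sinh^2(\pi\operatorname{Re}(x))+\cos^2(\pi\operatorname{Im}(x))$ grows.
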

In particular, we will use this to show that all but finitely many terms arising from a particular Appell function are exponentially decaying in the asymptotic limit.

\subsection{Ingham's Tauberian Theorem}
To conclude our main result, we use the following theorem of Ingham \cite{ingham1941tauberian} that gives an asymptotic formula for the coefficients of certain power series.

\begin{theorem}\label{Theorem: Ingham's Tauberian}
	Let $f(q) \coloneqq \sum_{n \geq 0} a(n)q^n$ be a power series with weakly increasing non-negative coefficients and radius of convergence equal to one. If there exist constants $A > 0$, $\lambda,\alpha \in \R$ such that
	\begin{equation*}
	f(e^{-\varepsilon}) \sim \lambda \varepsilon^\alpha e^{\frac{A}{\varepsilon}}
	\end{equation*}
	as $\varepsilon \rightarrow 0^+$, then
	\begin{equation*}
	a(n) \sim \frac{\lambda}{2 \sqrt{\pi}} \frac{ A^{\frac{\alpha}{2} + \frac{1}{4}}  }{n ^{\frac{\alpha}{2} + \frac{3}{4}}} e^{2 \sqrt{An}}
	\end{equation*}
	as $n \rightarrow \infty$.
\end{theorem}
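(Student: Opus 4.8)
The plan is to locate the source of the claimed asymptotic through a saddle-point heuristic, and then to turn that heuristic into a rigorous two-sided estimate using the monotonicity of the coefficients as the Tauberian input. Formally one has $a(n) = \frac{1}{2\pi}\int_{-\pi}^{\pi} f(e^{-\varepsilon+i\theta}) e^{(\varepsilon - i\theta)n}\,d\theta$, and substituting the hypothesised behaviour $f(e^{-s}) \sim \lambda s^{\alpha} e^{A/s}$ with $s = \varepsilon - i\theta$ produces a phase $A/s + ns$ whose saddle sits at $s = \varepsilon_n := \sqrt{A/n}$, where the phase equals $2\sqrt{An}$ and its second derivative is $2 n^{3/2} A^{-1/2}$. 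A formal Gaussian integration in $\theta$ then yields exactly $\frac{\lambda}{2\sqrt{\pi}} A^{\frac{\alpha}{2}+\frac14} n^{-\frac{\alpha}{2}-\frac34} e^{2\sqrt{An}}$, which fixes both the target constant and the correct scaling $\varepsilon_n = \sqrt{A/n}$. The entire difficulty is that the hypothesis only controls $f$ on the positive real axis, so this contour computation is not directly available; this is precisely what forces a Tauberian, rather than an Abelian, argument.

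First I would establish crude two-sided bounds of the right exponential order. Since the $a(m)$ are non-negative and weakly increasing, for any $\varepsilon>0$ we have $f(e^{-\varepsilon}) \ge \sum_{m\ge N} a(m) e^{-m\varepsilon} \ge a(N) \frac{e^{-N\varepsilon}}{1-e^{-\varepsilon}}$, hence $a(N) \le (1-e^{-\varepsilon}) e^{N\varepsilon} f(e^{-\varepsilon})$; optimising at $\varepsilon = \varepsilon_N$ and inserting the hypothesis gives $a(N) \le \lambda (1+o(1)) \varepsilon_N^{\alpha+1} e^{2\sqrt{AN}}$. A matching lower bound follows by writing $f(e^{-\varepsilon}) = \sum_{m\le N} a(m) e^{-m\varepsilon} + \sum_{m>N} a(m) e^{-m\varepsilon}$, bounding the head by $\frac{a(N)}{1-e^{-\varepsilon}}$ via monotonicity, and controlling the tail with the crude upper bound just obtained, so that for a suitable choice of $\varepsilon$ the tail is an $o(1)$ fraction of $f(e^{-\varepsilon})$. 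These bounds already give $\log a(n) \sim 2\sqrt{An}$, but they are off from the claimed asymptotic by the Gaussian width factor $n^{1/4}$, so they do not suffice on their own.

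The heart of the proof is to upgrade these to the sharp constant by localising the Abelian sum at the saddle scale. Fixing $n$ and setting $\varepsilon = \varepsilon_n$, I would show that the total $f(e^{-\varepsilon_n}) = \sum_m a(m) e^{-m\varepsilon_n}$ is concentrated on a window $|m-n| \lesssim n^{3/4}$: the crude bounds render the complementary ranges negligible, while inside the window monotonicity gives $a(n-h) \le a(m) \le a(n+h)$ for $|m-n| = h$, so that $a(m)$ may be replaced by $a(n)$ at the cost of a factor $1 + o(1)$, provided one can show that the sequence does not grow by more than this factor across a window of width $o(n^{3/4})$. To make the last point rigorous one compares $f$ at two nearby arguments — for instance studying $f(e^{-\varepsilon}) - f(e^{-\varepsilon(1+\delta)})$, whose asymptotics are again supplied by the hypothesis — in order to control the local density of the $a(m)$, after which the residual sum $\sum_m e^{-m\varepsilon_n}$ against the emerging Gaussian profile is evaluated as a Gaussian integral, reproducing the factor $\frac{1}{2\sqrt{\pi}}$, the power $A^{\frac14} n^{-\frac34}$, and, together with $\varepsilon_n^{\alpha} = A^{\frac{\alpha}{2}} n^{-\frac{\alpha}{2}}$, the full constant.

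The main obstacle is exactly this localisation step: extracting the correct local density of $a(m)$ near $m = n$, and hence the sharp constant, from purely global information (the behaviour of $f$ on the real axis) together with the single structural hypothesis of monotonicity. This is the genuinely Tauberian part of the statement, and it is where a careful comparison argument, as in Ingham's original treatment, must replace the unavailable contour integration; the non-negativity and weak monotonicity of the coefficients are used here in an essential, and not merely cosmetic, way.
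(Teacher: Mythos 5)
The paper itself offers no proof of this statement---it is quoted directly from Ingham's 1941 paper---so the relevant comparison is with Ingham's argument, and against that benchmark your proposal has a genuine gap at its central step. Your first two paragraphs are sound: the saddle heuristic correctly identifies the scale $\varepsilon_n=\sqrt{A/n}$ and the target constant, and the crude two-sided bounds obtained from monotonicity (which are off by a factor of order $n^{1/4}$) are standard. The problem is the localisation step in your third paragraph. At $\varepsilon=\varepsilon_n$ the sum $\sum_m a(m)e^{-m\varepsilon_n}$ is indeed concentrated on the window $|m-n|\lesssim n^{3/4}$, but on that window the coefficients are \emph{not} stable: since $a(m)$ grows like $e^{2\sqrt{Am}}$ up to polynomial factors, one has $a(n+cn^{3/4})/a(n)\approx e^{c\sqrt{A}\,n^{1/4}}\to\infty$. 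The proviso you attach---that the sequence grows by at most a factor $1+o(1)$ across windows of width $o(n^{3/4})$---is false; such stability holds only on windows of width $o(n^{1/2})$, and any window of that width carries only a fraction $o(n^{1/2})/n^{3/4}=o(1)$ of the mass of $f(e^{-\varepsilon_n})$. So the replacement $a(m)\to a(n)$ can never be performed on the region where the sum actually lives: the Gaussian profile is carried \emph{jointly} by $a(m)$ and the kernel $e^{-m\varepsilon_n}$, and monotonicity alone cannot decouple them. This mismatch of scales ($n^{3/4}$ for concentration versus $n^{1/2}$ for coefficient stability) is precisely the content of Ingham's theorem, not a technicality to be filled in.

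Your suggested repair---comparing $f(e^{-\varepsilon})$ with $f(e^{-\varepsilon(1+\delta)})$---does not close the gap, because the difference kernel $e^{-m\varepsilon}(1-e^{-m\varepsilon\delta})$, weighted against $a(m)$, is again spread over a window of width $\sim n^{3/4}$ (it is essentially a derivative of the same bump, with the same intrinsic width), so it cannot resolve the density of the $a(m)$ at the scale $o(n^{1/2})$ that the sandwich argument requires. The route that actually works, and is in essence Ingham's, is structurally different: first prove an \emph{averaged} Tauberian theorem for the partial sums $A(x)=\sum_{m\le x}a(m)$---this requires no monotonicity of $a(n)$ beyond non-negativity, but does require genuine Tauberian machinery (Wiener-type kernel comparison after an exponential change of variables) rather than direct localisation; then use the monotonicity hypothesis to sandwich $h\,a(n)$ between $A(n)-A(n-h)$ and $A(n+h)-A(n)$ with a window $h=\eta_n\sqrt{n}$, where $\eta_n\to 0$ more slowly than the (unknown) error rate in the asymptotics of $A$, so that the increments dominate the error terms while both bounds converge to the same limit. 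Your proposal contains neither the averaged theorem nor this slowly-shrinking-window differencing, and those two ingredients constitute the proof.
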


\section{Strict monotonicity of $N(m,n)$}\label{Section: strict monotonicity of N(m,n)}
In this section we show strict monotonicity of $N(m,n)$ for $n \geq 2m+25$. This follows work of Chan and Mao in \cite{chan2014inequalities} in which the following theorem regarding weak monotonicity of $N(m,n)$ is shown.

\begin{theorem}\label{Theorem: Chan/Mao weak monotonicity N(m,n)}
	For all non-negative integers $m$ and positive integers $n$ we have that
	\begin{equation*}
	N(m,n) \geq N(m,n-1),
	\end{equation*}
	except when $(m,n) = (\pm1,7), (0,8), (\pm3 , 11)$ and when $n=m+2$.
\end{theorem}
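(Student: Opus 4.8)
The plan is to reduce the problem to an explicit sign analysis. Since $N(m,n)=N(-m,n)$, I may assume $m\ge 0$, and it suffices to control the sign of the coefficients of $(1-q)\sum_{n\ge 0}N(m,n)q^n$. First I would record a closed form for $\sum_{n\ge 0}N(m,n)q^n$. Starting from the classical partial-fraction expansion of the rank generating function,
\begin{equation*}
R(\zeta;q) = \frac{1-\zeta}{(q;q)_\infty}\sum_{k\in\Z}\frac{(-1)^k q^{\frac{k(3k+1)}{2}}}{1-\zeta q^k},
\end{equation*}
I would extract the coefficient of $\zeta^m$ by expanding each geometric factor (for $k\ge 0$ and $k<0$ separately). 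The $k=0$ contributions cancel against the factor $1-\zeta$, leaving, for $m\ge 1$,
\begin{equation*}
\sum_{n\ge 0}N(m,n)q^n = \frac{1}{(q;q)_\infty}\sum_{j\ge 1}(-1)^{j-1}q^{\frac{j(3j-1)}{2}+jm}\left(1-q^j\right),
\end{equation*}
with an analogous (slightly longer, symmetric) expression in the case $m=0$.

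Next I would pass to the difference. Using $(q;q)_\infty=(1-q)(q^2;q)_\infty$, multiplication by $1-q$ gives
\begin{equation*}
\sum_{n}\left(N(m,n)-N(m,n-1)\right)q^n = \frac{1}{(q^2;q)_\infty}\sum_{j\ge 1}(-1)^{j-1}q^{\frac{j(3j-1)}{2}+jm}\left(1-q^j\right).
\end{equation*}
Writing $p_2(N)$ for the number of partitions of $N$ into parts $\ge 2$ (the coefficients of $1/(q^2;q)_\infty$), and noting the clean identity $p_2(N)=p(N)-p(N-1)$, comparison of coefficients yields the finite alternating sum
\begin{equation*}
N(m,n)-N(m,n-1) = \sum_{j\ge 1}(-1)^{j-1}\left(p_2(n-c_j)-p_2(n-c_j-j)\right),\qquad c_j:=\frac{j(3j-1)}{2}+jm,
\end{equation*}
in which only the finitely many $j$ with $c_j\le n$ survive. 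The dominant contribution is the $j=1$ block $p_2(n-m-1)-p_2(n-m-2)$, which is nonnegative once $n-m$ is not too small, since $p_2(N)=p(N)-p(N-1)$ is weakly increasing for large $N$ (equivalently, $p$ is eventually convex).

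The crux is then to show that this leading block dominates the alternating tail for all sufficiently large $n$, and to pin down the finite exceptional set. Here I would combine effective lower bounds for the differences $p_2(N)-p_2(N-1)$ with the fact that the pentagonal-type exponents $c_j$ grow quadratically in $j$, so that successive blocks $p_2(n-c_j)-p_2(n-c_j-j)$ shrink rapidly and the $\pm$ pattern is dominated by the $j=1$ block. The remaining small values of $n$ (equivalently small $n-m$) would be checked by direct computation; it is precisely these boundary cases—most transparently $n=m+2$, where the rank-$m$ partitions of $n-1=m+1$ already outnumber those of $n$—that produce the family $n=m+2$ together with the sporadic exceptions $(m,n)=(\pm1,7),(0,8),(\pm3,11)$. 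I expect the main obstacle to be exactly this sign control: because the defining sum is genuinely alternating, proving that the first block outweighs the rest \emph{uniformly} requires effective estimates on the growth of $p_2$ rather than soft asymptotics, and it is these estimates that delimit where the finitely many exceptions can occur. A combinatorial alternative—building an injection from rank-$m$ partitions of $n-1$ into those of $n$ by adjoining a cell to a non-maximal part—runs into exactly the rectangular partitions, which cannot be enlarged without altering the rank, again isolating the same exceptional cases.
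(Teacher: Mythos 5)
Your reduction to the Atkin--Swinnerton-Dyer type identity
\begin{equation*}
N(m,n)-N(m,n-1)=\sum_{j\ge 1}(-1)^{j-1}\bigl(p_2(n-c_j)-p_2(n-c_j-j)\bigr),\qquad c_j=\tfrac{j(3j-1)}{2}+jm,
\end{equation*}
is a correct identity (I verified it numerically for small $m$), but the sign analysis you propose to run on it fails at the decisive step, and not merely for lack of effective constants. The $j$-th block is a sum of $j$ consecutive second differences of $p$ evaluated near $n-c_j$; since $p(N)-2p(N-1)+p(N-2)\sim p(N)\pi^2/(6N)$ and $p(n-c_j)/p(n-c_1)\to 1$ as $n\to\infty$ for each fixed $j$, the blocks do \emph{not} shrink: block $j$ is asymptotically $j$ times block $1$. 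The quadratic growth of $c_j$ only damps the terms once $j$ is of order $n^{1/4}$, so the blocks first \emph{grow} roughly linearly in $j$ before decaying. Concretely, for $m=1$ and $n=100$: block $1=p_2(98)-p_2(97)=1850580$, while block $2=p_2(93)-p_2(91)=1998184$, so the two-term partial sum is already negative, $-147604$. Positivity of the full sum is a delicate cancellation among many oscillating terms of comparable and growing size; no argument of the form ``the $j=1$ block dominates the alternating tail'' can succeed, and making your route rigorous would require precise asymptotics with controlled error terms (Bringmann--Kane type machinery), far heavier than what you sketch.

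For comparison: the paper does not prove this theorem at all --- it quotes it from Chan and Mao --- and their proof, whose machinery the paper reuses in Section 3 to get the strict inequality, deliberately avoids the oscillating pentagonal-number series you run into. They start instead from the Eulerian form $R(a;q)=\sum_{n\ge 0}q^{n^2}/\bigl((aq;q)_n(q/a;q)_n\bigr)$, multiply by $1-q$, and decompose
\begin{equation*}
(1-q)R(a;q)=\sum_{m\in\Z}a^m\sum_{k\ge 0}q^{k^2}f_{m,k}(q),\qquad \sum_{m\in\Z}a^mf_{m,k}(q)\coloneqq\frac{1-q}{(aq;q)_k(q/a;q)_k},
\end{equation*}
then show each combination of the pieces $q^{k^2}f_{m,k}(q)$ has non-negative coefficients by elementary rearrangement into manifestly positive building blocks such as $(1-q^{m+1})/\bigl((1-q^2)(1-q^3)\bigr)$, with the finitely many exceptional pairs emerging from the low-order terms. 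If you want a proof along your lines, you would have to pair consecutive blocks and bound their differences uniformly in $j$, which is exactly where the real difficulty of the problem sits.
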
 

First, we state without proof some relevant results, beginning with the following trivial lemma which is an example of the famous Postage Stamp Problem.

\begin{lemma}\label{Lemma: coeffiecnt for n greater than 18}
	The coefficient of $q^n$ with $n \geq 18$ in the expression
	\begin{equation*}
	\sum_{j \geq 0} q^{3j} \sum_{k \geq 0} q^{4k}
	\end{equation*}
	is greater than or equal to two.
\end{lemma}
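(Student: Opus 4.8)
The plan is to interpret the coefficient combinatorially and then exploit the identity $3 \cdot 4 = 4 \cdot 3$ to manufacture a second representation. Expanding the product, the coefficient of $q^n$ equals the number of pairs $(j,k) \in \Z_{\geq 0}^2$ satisfying $3j + 4k = n$; thus the claim is precisely that every $n \geq 18$ admits at least two such representations.

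First I would record the underlying Frobenius (coin) fact that animates the reference to the Postage Stamp Problem: every integer $m \geq 6$ can be written as $3j + 4k$ with $j, k \geq 0$. This follows from the base cases $6 = 3 \cdot 2$, $7 = 3 + 4$, $8 = 4 \cdot 2$ together with the observation that if $m$ is representable then so is $m + 3$ (increment $j$ by one), giving the result for all $m \geq 6$ by induction. Equivalently, the Frobenius number of $\{3,4\}$ is $3 \cdot 4 - 3 - 4 = 5$, so $5$ is the largest non-representable integer.

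Next, given $n \geq 18$, I would set $m \coloneqq n - 12 \geq 6$ and fix a representation $m = 3a + 4b$ with $a, b \geq 0$. Adding $12$ back in the two ways $12 = 3 \cdot 4$ and $12 = 4 \cdot 3$ yields
\[
n = 3(a+4) + 4b = 3a + 4(b+3),
\]
so that both $(a+4, b)$ and $(a, b+3)$ lie in $\Z_{\geq 0}^2$ and solve $3j + 4k = n$. Since their first coordinates differ, these are two distinct representations, and therefore the coefficient of $q^n$ is at least two.

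There is essentially no serious obstacle here; the only points requiring a moment's care are verifying that the two constructed pairs are genuinely distinct (immediate from $a + 4 \neq a$) and that the shift by $12$ keeps us in the range $m \geq 6$ where representability is guaranteed, which is exactly what forces the hypothesis $n \geq 18$.
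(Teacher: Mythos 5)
Your proof is correct. The paper itself states this lemma without proof, calling it a trivial instance of the Postage Stamp Problem, and your argument (Frobenius number of $\{3,4\}$ equals $5$, then shift by $12 = 3\cdot 4 = 4\cdot 3$ to produce two distinct representations $3(a+4)+4b = 3a+4(b+3)$ of every $n \geq 18$) is exactly the elementary counting argument the authors had in mind, so there is nothing to correct or compare.
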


We also have the following result, see Lemma 10 of \cite{chan2014inequalities}.

\begin{lemma}\label{Lemma: coefficients of 1-q^m-1 / (1-q^2)(1-q^3) are non-neg}
	The expression
	\begin{equation*}
	\frac{1-q^{m+1}}{(1-q^2)(1-q^3)}
	\end{equation*}
	has non-negative power series coefficients for any positive integer $m$.
\end{lemma}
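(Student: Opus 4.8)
The plan is to read off the power series coefficients combinatorially and then establish a coefficient-wise inequality by an explicit injection of index sets.

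First I would set $c(n)$ to be the coefficient of $q^n$ in $\frac{1}{(1-q^2)(1-q^3)}$, so that expanding each geometric factor gives
\begin{equation*}
\frac{1}{(1-q^2)(1-q^3)} = \sum_{a \ge 0} q^{2a} \sum_{b \ge 0} q^{3b} = \sum_{n \ge 0} c(n) q^n,
\end{equation*}
where $c(n)$ is precisely the number of pairs $(a,b)$ of non-negative integers with $2a + 3b = n$. Multiplying by $1 - q^{m+1}$ then shows that the coefficient of $q^n$ in the expression of the lemma equals $c(n) - c(n-m-1)$, with the convention $c(j) = 0$ for $j < 0$. Hence the lemma is equivalent to the inequality $c(n) \ge c(n - m - 1)$ holding for all $n \ge 0$ and all positive integers $m$.

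The key step is to prove this inequality by constructing an injection from the set of representations of $n - m - 1$ into the set of representations of $n$. Since $m$ is a positive integer we have $m + 1 \ge 2$, and every integer $\ge 2$ is a non-negative integer combination of $2$ and $3$ (the sole positive non-representable value being $1$; this is the postage-stamp fact for the coprime pair $\{2,3\}$). Writing $m + 1 = 2\alpha + 3\beta$ with $\alpha, \beta \ge 0$, the map $(a, b) \mapsto (a + \alpha, b + \beta)$ sends each representation $2a + 3b = n - m - 1$ to a representation $2(a+\alpha) + 3(b+\beta) = n$, and is clearly injective. This yields $c(n) \ge c(n - m - 1)$, and when $n - m - 1 < 0$ the inequality is trivial since the right-hand side is $0$.

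There is no serious obstacle here: the argument is entirely elementary once the coefficients are interpreted as representation counts. The only point requiring (minimal) care is the representability of $m + 1$ as $2\alpha + 3\beta$, which fails precisely for the value $1$ and is exactly why the hypothesis $m \ge 1$ (equivalently $m + 1 \ge 2$) is needed; a shift by $1$ would indeed break the inequality, as $c(0) = 1 > 0 = c(1)$ shows. One could alternatively prove $c(n) \ge c(n-2)$ and $c(n) \ge c(n-3)$ separately by the two one-step injections and then compose them to cover every shift $\ge 2$, but handling the shift $m+1$ in a single step is cleaner.
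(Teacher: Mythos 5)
Your proof is correct. Note, however, that the paper does not prove this lemma at all: it is stated ``without proof'' and imported as Lemma 10 of Chan--Mao \cite{chan2014inequalities}, so there is no in-paper argument to match yours against. Your route --- interpreting the coefficient of $q^n$ as $c(n)-c(n-m-1)$ with $c(n)$ the number of representations $n=2a+3b$, and then injecting representations of $n-m-1$ into representations of $n$ by translating by a fixed representation $m+1=2\alpha+3\beta$ --- is a clean, self-contained combinatorial substitute for that citation. The underlying engine is exactly the postage-stamp fact for the coprime pair $\{2,3\}$ (every integer $\geq 2$ is representable, only $1$ is not), which is the same kind of fact the paper itself invokes for the pair $\{3,4\}$ in Lemma \ref{Lemma: coeffiecnt for n greater than 18}; Chan--Mao's own proof rests on the equivalent algebraic decomposition $1-q^{2\alpha+3\beta}=(1-q^{2\alpha})+q^{2\alpha}\bigl(1-q^{3\beta}\bigr)$, with each piece divisible by $1-q^2$ or $1-q^3$ into a non-negative polynomial, so your injection is essentially the combinatorial shadow of that identity. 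Your closing remark that the hypothesis $m\geq 1$ is sharp (since $c(1)=0<1=c(0)$, so the shift by $1$ fails) is a nice sanity check that the representability hypothesis is genuinely where positivity comes from.
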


Lemma 9 of \cite{chan2014inequalities} reads as follows.

\begin{lemma}\label{Lemma: writing out (a)_n}
	With  $(a)_n \coloneqq (a;q)_n$, we have that
	\begin{equation*}
	\frac{1-q}{(aq)_1 (q/a)_1} = \sum_{n \geq 0} \sum_{m = -n}^{n} (-1)^{m+n} a^m q^n,
	\end{equation*}
	and
	\begin{equation*}
	\begin{split}
	\frac{1-q}{(aq)_2 (q/a)_2} = & -q + \frac{1}{1-q^3} + \frac{q^2}{1-q^4} + \frac{q^8}{(1-q^3)(1-q^4)} \\
	& + \sum_{m \geq 1} (a^m + a^{-m}) q^m \left( \frac{1-q^{m+1}}{(1-q^2)(1-q^3)} + \frac{q^{m+3}}{(1-q^3)(1-q^4)} \right).
	\end{split}
	\end{equation*}
\end{lemma}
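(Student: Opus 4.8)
The first identity is purely formal, and I would dispatch it by geometric expansion. The plan is to write $\frac{1}{(aq)_1} = \frac{1}{1-aq} = \sum_{j \ge 0} a^j q^j$ and $\frac{1}{(q/a)_1} = \frac{1}{1-q/a} = \sum_{k \ge 0} a^{-k} q^k$, multiply to get $\sum_{j,k \ge 0} a^{j-k} q^{j+k}$, and then multiply by $1-q$. Collecting the coefficient of $a^m q^n$ from the two resulting sums yields a contribution $+1$ exactly when $m \equiv n \pmod 2$ and $|m| \le n$, and a contribution $-1$ exactly when $m \not\equiv n \pmod 2$ and $|m| \le n-1$. Since $|m| = n$ forces $m \equiv n \pmod 2$, these combine to $(-1)^{m+n}$ for all $|m| \le n$, which is the stated formula.

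For the second identity I would work with the rational function
\begin{equation*}
F(a) \coloneqq \frac{1}{(aq)_2 (q/a)_2} = \frac{a^2}{(1-aq)(1-aq^2)(a-q)(a-q^2)},
\end{equation*}
using $(q/a)_2 = (1-q/a)(1-q^2/a) = a^{-2}(a-q)(a-q^2)$. This is a proper rational function of $a$ (for fixed $q$) with four simple poles, at $a = q, q^2$ and $a = 1/q, 1/q^2$, so it admits a partial fraction decomposition
\begin{equation*}
F(a) = \frac{r_1}{a - 1/q} + \frac{r_2}{a-1/q^2} + \frac{r_3}{a-q} + \frac{r_4}{a-q^2},
\end{equation*}
whose residues I would compute directly; for instance $r_3 = \frac{q}{(1-q)(1-q^2)(1-q^3)}$ and $r_1 = \frac{-1}{q(1-q)(1-q^2)(1-q^3)}$, with $r_2, r_4$ obtained analogously.

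I would then expand each term as a formal Laurent series in $a$, consistently with the original expansion of the Pochhammer symbols, so that each coefficient of $a^m$ is a genuine power series in $q$. Concretely, the poles outside the unit circle are expanded in non-negative powers of $a$ via $\frac{1}{a - 1/q} = -q \sum_{j \ge 0} a^j q^j$ (and similarly for $1/q^2$), while the inner poles are expanded in negative powers via $\frac{1}{a - q} = \sum_{k \ge 0} q^k a^{-k-1}$ (and similarly for $q^2$). Summing and multiplying by $1-q$ gives, for $m \ge 1$,
\begin{equation*}
[a^m]\, \frac{1-q}{(aq)_2(q/a)_2} = \frac{q^m}{(1-q^2)(1-q^3)} - \frac{q^{2m+1}}{(1-q^3)(1-q^4)},
\end{equation*}
and the coefficient of $a^{-m}$ agrees by the manifest symmetry $a \leftrightarrow 1/a$ of $F$, which accounts for the factor $a^m + a^{-m}$; the constant term is handled identically by collecting the $j=0$ contributions of the two outer poles.

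The main obstacle is purely algebraic. The partial fraction computation delivers the coefficients in the ``raw'' form above, whereas the lemma states them in the grouped shape $q^m \big( \frac{1-q^{m+1}}{(1-q^2)(1-q^3)} + \frac{q^{m+3}}{(1-q^3)(1-q^4)} \big)$. Reconciling the two is a short manipulation over a common denominator using $1-q^4 = (1-q^2)(1+q^2)$, after which the difference of the two expressions collapses to zero; an identical (slightly longer) simplification matches my constant term $\frac{1}{(1-q^2)(1-q^3)} - \frac{q}{(1-q^3)(1-q^4)}$ to the stated $-q + \frac{1}{1-q^3} + \frac{q^2}{1-q^4} + \frac{q^8}{(1-q^3)(1-q^4)}$. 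The only genuine care needed beyond this bookkeeping is fixing the correct annulus of expansion, so that the inner and outer poles supply the negative and non-negative powers of $a$ respectively.
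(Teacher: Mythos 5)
Your proposal is correct, but there is a point worth knowing: the paper itself contains no proof of this statement to compare against. It is quoted verbatim as Lemma 9 of Chan and Mao \cite{chan2014inequalities}, prefaced by ``we state without proof some relevant results,'' so your argument serves as a self-contained replacement rather than an alternative to an in-paper proof. I checked the computations and they hold up. For the first identity, the parity bookkeeping is right: the product $\sum_{j,k\geq 0} a^{j-k}q^{j+k}$ has coefficient $1$ at $a^mq^n$ precisely when $|m|\leq n$ and $m\equiv n \pmod{2}$, and since $|m|=n$ forces that congruence, the $+1$ and $-1$ patterns do merge into $(-1)^{m+n}$ on all of $|m|\leq n$. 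For the second, your stated residues are correct, and the remaining two are $r_2 = \frac{1}{q(1-q)(1-q^3)(1-q^4)}$ and $r_4 = \frac{-q^3}{(1-q)(1-q^3)(1-q^4)}$; expanding the outer poles in nonnegative powers of $a$ and the inner poles in negative powers is the expansion valid in the annulus $|q|<|a|<1/|q|$, which is exactly where the original Pochhammer product expands, so the Laurent series you compute is the right one. This yields, after multiplication by $1-q$, your raw coefficient $\frac{q^m}{(1-q^2)(1-q^3)} - \frac{q^{2m+1}}{(1-q^3)(1-q^4)}$ for $m\geq 0$, and the discrepancy with the lemma's grouped form indeed vanishes via $\frac{1+q^2}{1-q^4}=\frac{1}{1-q^2}$: both the raw and grouped $m\geq 1$ coefficients agree, and for $m=0$ both your expression and the lemma's right-hand side collapse to $\frac{1-q+q^2}{(1-q^3)(1-q^4)}$. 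Finally, the symmetry $F(1/a)=F(a)$ legitimately transfers to the Laurent coefficients because the expansion annulus is invariant under $a\mapsto 1/a$, which justifies the $(a^m+a^{-m})$ grouping; alternatively, the inner-pole expansions give the coefficient of $a^{-m}$ directly as $r_3q^{m-1}+r_4q^{2m-2}$, which equals the coefficient of $a^m$.
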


We use results of \cite{chan2014inequalities} to show that, for sufficiently large $n$, the coefficients of $a^m q^n$ in the series
\begin{equation*}
\sum_{n \geq 0} \frac{(1-q) q^{n^2}}{(aq)_n (q/a)_n}
\end{equation*}
are strictly positive for $n \neq m+2$. This then implies the following proposition.
\begin{proposition}\label{Proposition: strict monototnicity N(m,n)}
	For positive $m$ and $n \geq 2m + 25$, or $m = 0$ and $n \geq 30$, we have that
	\begin{equation*}
	N(m,n) > N(m,n-1).
	\end{equation*}
\end{proposition}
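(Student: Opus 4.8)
The plan is to reduce the claim to a positivity assertion about power-series coefficients and then isolate a single dominant non-negative contribution. First I would recall that, setting $\zeta = a$ in the generating function from the introduction,
\begin{equation*}
R(a;q)=\sum_{n,m}N(m,n)a^mq^n=\sum_{k\ge 0}\frac{q^{k^2}}{(aq,q/a;q)_k},
\end{equation*}
so that the coefficient of $a^mq^n$ in $(1-q)R(a;q)$ is exactly $N(m,n)-N(m,n-1)$. Hence it suffices to prove this coefficient is positive in the stated ranges. Expanding gives
\begin{equation*}
(1-q)R(a;q)=\sum_{k\ge 0}\frac{(1-q)q^{k^2}}{(aq)_k(q/a)_k},
\end{equation*}
and, using $N(m,n)=N(-m,n)$, I may assume $m\ge 0$.

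Next I would extract the coefficient of $a^m$ term by term in $k$. The $k=0$ summand equals $1-q$ and affects only $m=0$; for $k=1$ and $k=2$ the coefficient of $a^m$ is read off directly from Lemma \ref{Lemma: writing out (a)_n}. For $m\ge 1$ this shows that the coefficient of $a^m$ in $(1-q)R(a;q)$ equals
\begin{equation*}
\frac{q^{m+1}}{1+q}+q^{m+4}\left(\frac{1-q^{m+1}}{(1-q^2)(1-q^3)}+\frac{q^{m+3}}{(1-q^3)(1-q^4)}\right)+\sum_{k\ge 3}c_{k,m}(q),
\end{equation*}
where $c_{k,m}(q)$ is the coefficient of $a^m$ in $\frac{(1-q)q^{k^2}}{(aq)_k(q/a)_k}$. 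The crucial input from \cite{chan2014inequalities}, which underlies the weak monotonicity of Theorem \ref{Theorem: Chan/Mao weak monotonicity N(m,n)}, is that every tail series $c_{k,m}(q)$ with $k\ge 3$ has non-negative coefficients.

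Granting this, positivity for $m\ge 1$ follows by tracking signs. The coefficient of $q^n$ in $\frac{q^{m+1}}{1+q}$ equals $(-1)^{n-m-1}$, hence is at least $-1$; the first bracketed piece of the $k=2$ term has non-negative coefficients by Lemma \ref{Lemma: coefficients of 1-q^m-1 / (1-q^2)(1-q^3) are non-neg}; and the second piece is $\frac{q^{2m+7}}{(1-q^3)(1-q^4)}$, whose coefficient of $q^n$ is at least $2$ once $n-2m-7\ge 18$, i.e. $n\ge 2m+25$, by Lemma \ref{Lemma: coeffiecnt for n greater than 18}. Together with the non-negative tail this gives a $q^n$-coefficient of at least $-1+2=1>0$, as required.

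For $m=0$ I would argue identically, now also using the $k=0$ term $1-q$ and the $a^0$-part of the $k=2$ term from Lemma \ref{Lemma: writing out (a)_n}. For $n\ge 30$ the contributions of $1-q$ and of $q^4\cdot(-q)$ vanish, so the only negative contribution comes from $\frac{q}{1+q}$ and is at least $-1$, while the piece $\frac{q^{12}}{(1-q^3)(1-q^4)}$ contributes at least $2$ by Lemma \ref{Lemma: coeffiecnt for n greater than 18}, again forcing positivity. Since $n\ge 2m+25$ (respectively $n\ge 30$) already forces $n>m+2$, the exceptional value $n=m+2$ plays no role, and we conclude $N(m,n)>N(m,n-1)$ throughout. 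I expect the main obstacle to be the non-negativity of the $k\ge 3$ tails $c_{k,m}(q)$: the $k=0,1,2$ terms are completely explicit via Lemma \ref{Lemma: writing out (a)_n}, but controlling the sign of all higher terms simultaneously is the genuinely delicate point, and it is precisely here that the results of \cite{chan2014inequalities} are invoked.
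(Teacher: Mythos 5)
Your reduction and your treatment of the case $m \geq 1$ are correct and essentially identical to the paper's argument: there too the $k=1$ term contributes $q f_{m,1}(q) = \frac{q^{m+1}}{1+q}$ (coefficients at least $-1$), the $k=2$ term is split via Lemma \ref{Lemma: writing out (a)_n} into a piece handled by Lemma \ref{Lemma: coefficients of 1-q^m-1 / (1-q^2)(1-q^3) are non-neg} and the piece $\frac{q^{2m+7}}{(1-q^3)(1-q^4)}$ handled by Lemma \ref{Lemma: coeffiecnt for n greater than 18}, the bound $n \geq 2m+25$ arises exactly as you describe, and the non-negativity of $\sum_{k\geq 3} q^{k^2} f_{m,k}(q)$ for $m \geq 1$ is the input quoted from Chan--Mao.

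The gap is in the case $m=0$. Your ``crucial input'' --- that every tail $c_{k,m}(q)$ with $k \geq 3$ has non-negative coefficients --- is false when $m=0$. The coefficient of $a^0 q$ in $1/\left((aq)_k (q/a)_k\right)$ is $0$, so $f_{0,k}(q) = 1 - q + O(q^2)$ and $c_{k,0}(q) = q^{k^2}f_{0,k}(q)$ carries the term $-q^{k^2+1}$; for instance the coefficient of $q^{10}$ in $\sum_{k\geq 3} c_{k,0}(q)$ equals $-1$, since only $k=3$ reaches that exponent. This is precisely why the paper does not quote tail non-negativity for $m=0$, but instead invokes the proof of Lemma 13 of Chan--Mao, which writes the $m=0$ tail as a non-negative series $\sum_n b_n q^n$ minus $\sum_{k \geq 3} q^{k^2+1}$. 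Concretely, your ledger fails at even exponents of the form $n = k^2+1$ with $k \geq 7$ odd: at $n=50$, say, $\frac{q}{1+q}$ contributes $-1$, the tail contributes another $-1$, and Lemma \ref{Lemma: coeffiecnt for n greater than 18} only guarantees $+2$ from $\frac{q^{12}}{(1-q^3)(1-q^4)}$, so you obtain $\geq 0$ rather than $>0$. The repair is the paper's bookkeeping: keep the positive pieces $\frac{q^4}{1-q^3}$ and $\frac{q^6}{1-q^4}$ of the $k=2$ term that you discarded, and absorb the tail negatives by parity --- $-\sum_{j\geq 2} q^{4j^2+1}$ against the odd powers $\sum_{n\geq 0} q^{2n+1}$ coming from $\frac{q}{1+q}$, and $-\sum_{j\geq 1} q^{4j^2+4j+2}$ (exponents $\equiv 2 \pmod{4}$) against $\frac{q^6}{1-q^4}$ --- so that only $-\sum_{n\geq 0} q^{2n+2}$ remains to be dominated by the coefficient $\geq 2$ of $\frac{q^{12}}{(1-q^3)(1-q^4)}$, which is what yields the stated bound $n \geq 30$.
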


\begin{proof}
	
	As in \cite{chan2014inequalities} we define
	\begin{equation*}
	\sum_{m \in \Z} a^m f_{m,k}(q) \coloneqq \frac{1-q}{(aq)_k (q/a)_k}.
	\end{equation*}
	Then
	\begin{equation}\label{Equation: generating function in terms of f_m,k}
	\begin{split}
	\sum_{n \geq 0} \frac{(1-q) q^{n^2}}{(aq)_n (q/a)_n} =& 1 - q + \sum_{n \geq 1} q^{n^2} f_{0,n}(q) \\
	& + \sum_{m \geq 1} \left(a^m + a^{-m} \right) \left(q f_{m,1} (q) + q^4 f_{m,2} (q) + \sum_{n \geq 3} q^{n^2} f_{m,n} (q) \right).
	\end{split}
	\end{equation}
	The main idea of \cite{chan2014inequalities} is to show that these combinations of $f_{m,n} (q)$ have non-negative coefficients of $q^n$ and $a^m q^n$ for $n$ large enough, and away from $n = m+2$ (since $N(n-2,n) = 0$ trivially). Here, we simply observe that for some larger bound on $n$ the coefficients are in fact strictly positive, implying our result.
	
	Concentrating firstly on the first sum in the right-hand side of \eqref{Equation: generating function in terms of f_m,k}, the proof of Lemma 13 in \cite{chan2014inequalities} (correcting a minor error there) gives
	\begin{equation*}
	\sum_{n \geq 1} q^{n^2} f_{0,n} (q) = \sum_{ n \geq 0} (-1)^n q^{n+1} + q^4 \left( -q + \frac{1}{1-q^3} + \frac{q^2}{1-q^4} + \frac{q^8}{(1-q^3)(1-q^4)}   \right) - \sum_{ n \geq 3} q^{n^2 + 1} + \sum_{n \geq 0} b_n q^n,
	\end{equation*}
	for some nonnegative sequence $\{b_n \}_{n \geq 0}$. Thus, if we show that
	\begin{equation*}
	\sum_{ n \geq 0} (-1)^n q^{n+1} + q^4 \left( -q + \frac{1}{1-q^3} + \frac{q^2}{1-q^4} + \frac{q^8}{(1-q^3)(1-q^4)}   \right) - \sum_{n \geq 3} q^{n^2 + 1}
	\end{equation*}
	has strictly positive coefficients of $q^n$ for large enough $n$ then we are done for this term. Expanding the above expression gives
	\begin{equation*}
	\sum_{n \geq 0} q^{2n + 1} - \sum_{n \geq 0} q^{2n+2} - q^5 + \frac{q^4}{1 - q^3} + \frac{q^6}{1 - q^4} + \frac{q^{12}}{(1-q^3) (1-q^4)} - \sum_{n \geq 2} q^{4n^2 + 1} - \sum_{n \geq 1} q^{4n^2 +4n+2}.
	\end{equation*}
	As in \cite{chan2014inequalities} we note that both of the expressions
	\begin{equation*}
	\sum_{n \geq 0} q^{2n + 1}  - \sum_{n \geq 2} q^{4n^2 + 1}, \hspace{20pt} \frac{q^6}{1 - q^4} -\sum_{n \geq 1} q^{4n^2 +4n+2}
	\end{equation*}
	have non-negative coefficients. So, it remains to show that
	\begin{equation}\label{Equation: Lemma 13 coeffs}
	\frac{q^{12}}{(1-q^3) (1-q^4)} - \sum_{n \geq 0} q^{2n+2}
	\end{equation} 
	has strictly positive coefficients for every $n$ large enough. Using Lemma \ref{Lemma: coeffiecnt for n greater than 18} it is easy to see that for $n \geq 30$ the coefficients of $q^n$ in \eqref{Equation: Lemma 13 coeffs} are strictly positive. 
	
	We next consider the second sum in the right-hand side of \eqref{Equation: generating function in terms of f_m,k} i.e. the expression
	\begin{equation*}
	\sum_{m \geq 1} \left(a^m + a^{-m} \right) \left(q f_{m,1} (q) + q^4 f_{m,2} (q) + \sum_{n \geq 3} q^{n^2} f_{m,n} (q) \right),
	\end{equation*}
	and we wish to show that, for $n$ sufficiently large and $ n \neq m+2$, the coefficients of $a^m q^n$ are strictly positive.
	
	Consider first the terms
	\begin{equation*}
	\frac{q(1-q)}{(aq)_1 (q/a)_1} + \frac{q^4 (1-q)}{(aq)_2 (q/a)_2}.
	\end{equation*}
	We now show that these have positive coefficients of $q^n$ for large enough $n$. This will imply that
	\begin{equation*}
	q f_{m,1} (q) + q^4 f_{m,2} (q)
	\end{equation*} also has positive coefficients for large enough $n$ and $m \geq 1$. Unlike in \cite{chan2014inequalities} we do not need to split this into three cases. Then, by Lemma \ref{Lemma: writing out (a)_n},  we want to show that 
	\begin{equation*}
	q \sum_{n \geq 0} \sum_{\substack{m = -n \\ m \neq 0}}^{n} (-1)^{m+n} a^m q^n + q^4 \sum_{m \geq 1} (a^m + a^{-m}) q^m \left( \frac{1-q^{m+1}}{(1-q^2)(1-q^3)} + \frac{q^{m+3}}{(1-q^3)(1-q^4)} \right)
	\end{equation*}
	has positive coefficients for $n$ large enough. By Lemma \ref{Lemma: coefficients of 1-q^m-1 / (1-q^2)(1-q^3) are non-neg} it clearly suffices to choose $n$ such that the coefficient of $q^n$ in
	\begin{equation*}
	\frac{q^{2m+7}}{(1-q^3)(1-q^4)}
	\end{equation*}
	is at least two. By Lemma \ref{Lemma: coeffiecnt for n greater than 18} we see that choosing $n \geq 2m + 25$ will suffice. Therefore the coefficients of $ q^n$ with $n \geq 2m + 25$ and $m \geq 1$ in the expression
	\begin{equation*}
	\sum_{m \geq 1} \left(a^m + a^{-m} \right) \left(q f_{m,1} (q) + q^4 f_{m,2} (q) \right)
	\end{equation*}
	are strictly positive.
	
	From \cite{chan2014inequalities} we have that $\sum_{k \geq 3} q^{k^2} f_{m,k} (q)$ has non-negative coefficients for all $n$, and so we conclude overall that 
	\begin{equation*}
	\begin{split}
	& N(0,n) > N(0,n-1) \text{ for } n \geq 30, \\ 
	& N(m,n) > N(m,n-1) \text{ for } m \geq 1, n \geq 2m + 25.
	\end{split}
	\end{equation*} 
	
\end{proof}

\section{Monotonicity of $N(r,t;n)$}\label{Section: monotonicty of N(r,t;n)}
Using results of the previous section we now prove the following theorem.

\begin{theorem}\label{Theorem: N(r,t;n) weakly increasing}
	Let $0 \leq r < t$ and $n \geq M$ where $M \coloneqq \max(2r+25, 2(t-r)+25)$. Then we have that
	\begin{equation*}
	N(r,t;n) \geq N(r,t;n-1).
	\end{equation*}
\end{theorem}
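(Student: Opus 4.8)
The plan is to reduce everything to the monotonicity of $N(m,n)$ proved above. Starting from the combinatorial identity
\begin{equation*}
N(r,t;n) = \sum_{m \equiv r \pmod{t}} N(m,n),
\end{equation*}
where $m$ runs over \emph{all} integers (positive, negative and zero) in the residue class, I would subtract the analogous identity for $n-1$ to obtain
\begin{equation*}
N(r,t;n) - N(r,t;n-1) = \sum_{m \equiv r \pmod{t}} \bigl( N(m,n) - N(m,n-1) \bigr),
\end{equation*}
so that it suffices to show this sum is non-negative. I would then organise the summands according to $|m|$, exploiting the conjugation symmetry $N(m,n) = N(-m,n)$.

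By Theorem \ref{Theorem: Chan/Mao weak monotonicity N(m,n)}, each summand $N(m,n) - N(m,n-1)$ is non-negative apart from the listed finite set of pairs and the case $n = |m| + 2$. Since $n \geq M > 25$, the finite exceptions (which occur at $n \in \{7,8,11\}$) are irrelevant, so the only indices at which a summand can be negative are $|m| = n-2$. A direct inspection of the extremal ranks gives $N(n-1,n) - N(n-1,n-1) = 1 - 0 = +1$, $N(n-2,n) - N(n-2,n-1) = 0 - 1 = -1$, and $N(m,n) - N(m,n-1) = 0$ for $|m| \geq n$. Consequently the sum equals a sum of non-negative interior terms (those with $|m| \leq n-3$), together with $P$ contributions of $+1$ coming from the ranks with $|m| = n-1$ and $Q$ contributions of $-1$ coming from the ranks with $|m| = n-2$; since only the two ranks $\pm(n-1)$, respectively $\pm(n-2)$, can lie in the class, we have $P,Q \in \{0,1,2\}$, and in particular $Q \leq 2$.

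To defeat the at most two negative boundary terms I would exhibit two distinct, strictly positive interior summands. The two ranks of smallest absolute value in the class $r \pmod{t}$ are $\{r,\,r-t\}$ (of absolute values $r$ and $t-r$) when $0 < r < t$ and $2r \neq t$, and the pair $\{t,-t\}$ or $\{t/2,-t/2\}$ in the self-conjugate cases $r=0$ and $r=t/2$. In each situation these are two distinct ranks of positive absolute value, and a short estimate using $n \geq M$ (for instance $n \geq 2t+25$ forces $t < n-2$) shows each absolute value is strictly less than $n-2$, so both summands lie in the interior region $|m| \leq n-3$. Moreover $n \geq M = \max(2r+25,\,2(t-r)+25)$ is precisely the hypothesis needed to apply the strict inequality of Proposition \ref{Proposition: strict monototnicity N(m,n)} to each of them, giving two summands that are at least $1$. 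Combining,
\begin{equation*}
N(r,t;n) - N(r,t;n-1) \geq 2 + P - Q \geq 2 + 0 - 2 = 0,
\end{equation*}
which is the desired conclusion.

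The main obstacle, and the reason for the exact constant appearing in $M$, is the bookkeeping around the self-conjugate residues $r \in \{0,\,t/2\}$, where $r \equiv -r \pmod{t}$. In those cases both boundary ranks $\pm(n-2)$ fall into the class simultaneously, forcing $Q=2$, and one must verify that the two smallest interior ranks $\pm t$ (or $\pm t/2$) also both lie in the class and that $M$ is large enough to force strict monotonicity at exactly these indices. A pleasant by-product of choosing these particular pairs is that one never uses the rank $m=0$, so the weaker threshold $n \geq 30$ demanded by Proposition \ref{Proposition: strict monototnicity N(m,n)} for $m=0$ need not be invoked at all.
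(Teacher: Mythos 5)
Your proof is correct, and for $r \neq 0$ it is essentially the paper's argument: both expand $N(r,t;n) = \sum_{k \in \Z} N(r+kt,n)$, observe via Theorem \ref{Theorem: Chan/Mao weak monotonicity N(m,n)} that (for $n \geq M > 11$) the only summands that can decrease are the at most two ranks with $|m| = n-2$, each worth $-1$, and cancel this deficit by applying the strict inequality of Proposition \ref{Proposition: strict monototnicity N(m,n)} to the two smallest-modulus nonzero ranks $r$ and $r-t$ in the class --- precisely what the threshold $M = \max(2r+25,\,2(t-r)+25)$ is tailored for. The genuine difference is the self-conjugate case $r=0$, where both boundary ranks $\pm(n-2)$ can lie in the class simultaneously. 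The paper compensates there with the single rank $m=0$, which forces it to strengthen monotonicity to $N(0,n) \geq N(0,n-1)+2$; this costs a reworking of the proof of Proposition \ref{Proposition: strict monototnicity N(m,n)} (showing the relevant coefficients are strictly greater than one for $n \geq 42$) together with a MAPLE verification on the remaining finite range. You instead discard $m=0$ altogether and use the pair $\pm t$, each contributing $+1$ by strict monotonicity, which is admissible since $M = 2t+25$ when $r=0$ and $t < n-2$. Your route is cleaner and uniform across all residues (the other self-conjugate class $r = t/2$ is handled by the same two-term device), and it eliminates both the strengthened inequality and the computer check; what the paper's extra work buys is only a sharper explicit range ($n \geq 15$) in the $r=0$ case, which the statement of the theorem does not require.
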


\begin{proof}

	We first rewrite $N(r,t;n)$ as
	\begin{equation}\label{Equation: writing out N(r,t;n)}
	N(r,t;n) = \sum_{k \in \Z} N(r + kt, n),
	\end{equation}
	in particular noting that this is a finite sum, since for $|r+kt|> n$ we have $N(r+kt,n) = 0$. We differentiate two separate cases, depending on whether $r = 0$ or $r \neq 0$. If $r + kt + 2 \neq n$ for any $k \in \Z$ then we use Theorem \ref{Theorem: Chan/Mao weak monotonicity N(m,n)} directly to conclude that $N(r,t;n) \geq N(r,t;n-1)$. 
	
	Now assume that there exists a term where $r+kt + 2 = n$. First, let $r \neq 0$. We want to show that
	\begin{equation*}
	\sum_{k \in \Z} N(r + kt, n) \geq \sum_{k \in \Z} N(r + kt, n-1).
	\end{equation*}
	Since $N(-m,n) = N(m,n)$ we see that there are at most two terms that vanish on the left-hand side, given by $N(n-2,n)$ and $N(2-n,n)$. Then their counterparts on the right-hand side satisfy $N(n-2,n-1) = N(2-n,n-1) = 1$. Since $r \neq 0$ and $n \geq M$ there must be at least two non-zero intermediate terms e.g. $N(r,n)$ and $N(r-t,n)$. For each of these intermediate terms we apply Proposition \ref{Proposition: strict monototnicity N(m,n)} and conclude our result for $n \geq M$.

	We now turn to the case of $r = 0$. Then \eqref{Equation: writing out N(r,t;n)} becomes
	\begin{equation*}
	N(0,n) + 2 N(t, n) + \dots + 2 N(n-2, n),
	\end{equation*}
	where again the last term vanishes. We want to show that this expression is greater than or equal to
	\begin{equation*}
	N(0,n-1) + 2 N(t, n-1) + \dots + 2 N(n-1, n-1),
	\end{equation*}
	where the last term is equal to two. Then it is enough to use that $N(0,n) \geq N(0,n-1) + 2$ for large enough $n$. Further, it is easy to see that we may adapt the proof of Proposition \ref{Proposition: strict monototnicity N(m,n)} to show that  $\sum_{n \geq 1} f_{0,n} (q) q^{n^2}$ has coefficients strictly greater than one for all $n \geq 42$, implying that
	\begin{equation*}
	N(0,n) \geq N(0,n-1) + 2,
	\end{equation*}
	for $n \geq 42$. For values of $n$ between $1$ and $42$ we test on MAPLE the expression $N(0,n) - N(0,n-1)$ and can show for all $n \geq 15$ we have that $N(0,n) \geq N(0,n-1) + 2$.
	
	Therefore for $n \geq 15$ we have that
	\begin{equation*}
	N(0,t;n) \geq N(0,t;n-1).
	\end{equation*}
	Combining the above arguments finishes the proof.
	
\end{proof}

\section{Asymptotic behaviour of the Appell function $A_3(u, -\tau; \tau)$}\label{Section: Asymptotic behaviour of Appell}
In this section we investigate the asymptotic behaviour of the Appell function $A_3(u,-\tau;\tau)$ when we let $\tau = \frac{i \varepsilon}{2 \pi}$ and $\varepsilon \rightarrow 0^+$. We further impose that $0 < u \leq \frac{1}{2}$ throughout. We prove the following theorem.

\begin{theorem}\label{Theorem: asymptotics of A_3}
	Let $0 < u \leq \frac{1}{2}$ and $\tau = \frac{i \varepsilon}{2 \pi}$. Then
	\begin{equation*}
	A_3(u,-\tau;\tau) \rightarrow 0
	\end{equation*}
	as $\varepsilon \rightarrow 0^+$.
\end{theorem}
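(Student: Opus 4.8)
The plan is to reduce everything to the degeneration of $\vartheta$, $\mu$ and the Mordell integral $h$ as the modulus shrinks, and then to separate a handful of explicit ``main'' contributions from a family of Mordell integrals that are forced to decay by Proposition~\ref{Proposition: bounding h(z;tau)}. First I would use the level decomposition recalled in Section~\ref{Section: prelims Appell functions} with $\ell = 3$ and $v = -\tau$, which gives
\begin{equation*}
A_3(u,-\tau;\tau) = \sum_{k=0}^{2} e^{2\pi i u k}\, \vartheta\!\left(z_k;3\tau\right) \mu\!\left(3u,z_k;3\tau\right), \qquad z_k \coloneqq 1 + (k-1)\tau ,
\end{equation*}
so that each summand is the classical Appell function $A_1(3u,z_k;3\tau)$ at modulus $3\tau = \tfrac{3i\varepsilon}{2\pi} \to 0$. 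Note that $\re(z_k) = 1$ for every $k$, since $\tau$ is purely imaginary; this real part is what ultimately drives the decay.

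Next I would apply the modular transformations of $\vartheta$ and $\mu$ from Section~\ref{Section: prelims Appell functions} to pass to the dual modulus $-\tfrac{1}{3\tau} = \tfrac{2\pi i}{3\varepsilon}$, whose imaginary part tends to $+\infty$. Combining the two transformations, each summand splits as
\begin{equation*}
e^{2\pi i u k}\left[ \frac{1}{3\tau}\, e^{\frac{\pi i (9u^2 - 6 u z_k)}{3\tau}}\, A_1\!\left(\tfrac{u}{\tau},\tfrac{z_k}{3\tau};-\tfrac{1}{3\tau}\right) + \frac{e^{-\frac{\pi i z_k^2}{3\tau}}}{2\sqrt{-3i\tau}}\, \vartheta\!\left(\tfrac{z_k}{3\tau};-\tfrac{1}{3\tau}\right) h\!\left(3u - z_k;3\tau\right) \right].
\end{equation*}
The second (Mordell) piece is where Proposition~\ref{Proposition: bounding h(z;tau)} enters. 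Applying the $h$-transformation \eqref{Equation: transformation h(z;tau)} rewrites $h(3u - z_k;3\tau)$ as $h$ evaluated at the dual modulus $-\tfrac1{3\tau}$, which is exactly of the shape $\tfrac{i}{\kappa z}$ with $\kappa = 3$ and $z = \tfrac{\varepsilon}{2\pi}$, so that $\re(\tfrac1z) = \tfrac{2\pi}{\varepsilon}\to\infty$. After normalising the argument into the ranges $|\alpha| < \tfrac12$, $-\tfrac12 \le \beta < \tfrac12$ required by the proposition, using the periodicity of $h$ and, at the boundary, the shift relation \eqref{Equation: shifting h}, the bound in Proposition~\ref{Proposition: bounding h(z;tau)} shows that this piece, together with its $\vartheta$- and Gaussian-prefactors, is $O(e^{-c/\varepsilon})$ for some $c>0$, hence negligible.

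It remains to treat the main pieces $\tfrac{1}{3\tau} e^{\pi i (9u^2 - 6uz_k)/(3\tau)} A_1(\tfrac{u}{\tau},\tfrac{z_k}{3\tau};-\tfrac1{3\tau})$. Here I would expand the dual Appell function as its defining series in the dual nome $e^{-4\pi^2/(3\varepsilon)}$ and isolate the finitely many summands that are not already exponentially small, discarding the rest by the same Mordell bound. The decisive input is the Gaussian prefactor: since $\tfrac{1}{3\tau} = -\tfrac{2\pi i}{3\varepsilon}$ and $\re(9u^2 - 6u z_k) = 9u^2 - 6u = 3u(3u-2)$, which is strictly negative for $0 < u \le \tfrac12$, this prefactor has absolute value $e^{2\pi^2 u(3u-2)/\varepsilon} \to 0$. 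A short computation identifies the $n=1$ term of the dual series as the dominant survivor, and combining its growth with the prefactor yields a total exponent equal to a negative multiple of $(u-\tfrac13)(u-\tfrac23)$ when $u > \tfrac13$ and of $u(3u-1)$ when $u < \tfrac13$; both are strictly negative, so for each fixed $u$ the main piece tends to $0$, overwhelming the polynomial factor $\tfrac{1}{3\tau}$. Summing the three decaying main pieces with the negligible Mordell contributions then gives $A_3(u,-\tau;\tau) \to 0$.

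The hard part will be the bookkeeping of exponents in the final paragraph: because the arguments $\tfrac{u}{\tau}$ and $\tfrac{z_k}{3\tau}$ blow up, the surviving terms of the dual series grow exponentially, and one must verify that the decay of the Gaussian prefactor strictly dominates this growth. The most delicate point is the borderline value $u = \tfrac13$, at which the dominant term of the dual series develops a spurious pole (its denominator $1 - e^{2\pi i \cdot 3u/(3\tau)}e^{-4\pi^2/(3\varepsilon)}$ vanishes) even though $A_3(u,-\tau;\tau)$ is manifestly regular there on the imaginary axis; this case I would handle separately, either by regrouping the three summands so that the singularity cancels or by a continuity argument in $u$.
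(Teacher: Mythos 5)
Your proposal is correct in outline, and it takes a genuinely different route from the paper even though it draws on the same toolkit (the level-reduction of $A_3$, the modular transformations of $\vartheta$, $\mu$, $h$, and Proposition \ref{Proposition: bounding h(z;tau)}). The paper transforms $A_3$ so that the dual $\vartheta$ and $\mu$ are evaluated at the \emph{constants} $\frac{k-1}{3}$; the dual theta factors are then tiny, but the price is that the leading term of the $\vartheta\mu$-part grows exponentially for $u>\frac13$, and the proof must manufacture, by applying the shift relation \eqref{Equation: shifting h} inside the Mordell part whenever $u>\frac16$, an explicit companion term (the paper's $S_{2,2}$) whose leading term cancels that growth \emph{exactly}, after which the subleading terms are checked. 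You instead keep the unreduced arguments $\frac{z_k}{3\tau}=\frac{1}{3\tau}+\frac{k-1}{3}$ (your transformation identity is correct; I verified it). This redistributes the exponential prefactors so that, for $u\neq\frac13$, all six of your pieces decay individually: the main pieces with exponents $\frac{2\pi^2}{\varepsilon}\,u(3u-1)$ for $u<\frac13$ and $\frac{6\pi^2}{\varepsilon}\,(u-\frac13)(u-\frac23)$ for $\frac13<u\le\frac12$ (both strictly negative; they are \emph{positive} multiples of the negative quantities you name, so ``negative multiple'' is a wording slip, but the conclusion of decay is right), and Mordell pieces bounded by a power of $\frac1\varepsilon$ times an exponentially small factor. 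Your route buys a structurally cleaner endgame --- only upper bounds, never an exact cancellation between the Appell part and the Mordell part; what it costs is that the dual series now have arguments with large imaginary part, so individual terms blow up (e.g.\ $\vartheta(\frac{z_k}{3\tau};-\frac{1}{3\tau})$ grows like $e^{\pi^2/(2\varepsilon)}$), and the exponent bookkeeping you describe has to absorb this. It does.

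Two steps need more care than your sketch gives them. First, $h$ has no periodicity: besides the modular relation \eqref{Equation: transformation h(z;tau)} and evenness, the only shift available is \eqref{Equation: shifting h}, which is inhomogeneous. In your split, Proposition \ref{Proposition: bounding h(z;tau)} applies directly (with $\beta=1-3u$, $\alpha=-\frac{k-1}{3}$) precisely when $u>\frac16$; for $u\le\frac16$ you must invoke \eqref{Equation: shifting h}, and its extra term $2e^{-\pi i z-\pi i\tau/4}$ is \emph{not} covered by the Proposition and must be estimated separately. In your decomposition it does decay (after combining with all prefactors, like $e^{-2\pi^2u(1-3u)/\varepsilon}$, a rate that degenerates as $u\to0^+$ but is fine for fixed $u$); this check cannot be waved through, because in the paper's decomposition the very same shift term is the one that is exponentially large and performs the cancellation. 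Second, at $u=\frac13$ the ``continuity in $u$'' fallback is not valid: pointwise convergence for each $u\ne\frac13$ says nothing at $u=\frac13$ without uniformity, and your per-$k$ estimates are not uniform there. Your other suggestion is the correct one (and is exactly how the paper treats its removable singularity): sum over $k$ first, which annihilates every term of the dual series with index $n\not\equiv 0\pmod{3}$; the spurious poles disappear, and the surviving $n=0$ term gives decay like $e^{-6\pi^2u(1-u)/\varepsilon}$, uniformly in $u$ near $\frac13$.
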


\begin{proof}
	Using the transformation formulae given in Section \ref{Section: prelims Appell functions} we rewrite the level three Appell function
	\begin{equation*}
	\begin{split}
	A_3(u,v; \tau) = & \frac{1}{3 \tau} \sum_{k = 0}^{2} e^{\frac{\pi i u (3 u - 2 v)}{\tau}} \vartheta\left( \frac{v}{3 \tau} + \frac{k}{3} ; - \frac{1}{3 \tau} \right) \mu \left( \frac{u}{\tau}, \frac{v}{3 \tau} + \frac{k}{3} ; -\frac{1}{3 \tau} \right) \\
	& + \frac{1}{\sqrt{-12 i \tau}} \sum_{k = 0}^{2} e^{ \pi i \left( \frac{-k^2 \tau}{3} + \frac{6uk - 2vk}{3} - \frac{v^2}{3 \tau} \right)}   \vartheta\left( \frac{v}{3 \tau} + \frac{k}{3} ; - \frac{1}{3 \tau} \right)  h(3u-v-k\tau ; 3\tau).
	\end{split}
	\end{equation*} 
	Specialising to $v = - \tau$ we obtain
	\begin{equation*}\label{Equation: A_3(u,-tau; tau)}
	\begin{split}
	A_3(u,-\tau; \tau) = & \frac{1}{3 \tau} e^{\frac{\pi i u (3 u + 2 \tau)}{\tau}} \sum_{k = 0}^{2}  \vartheta\left( -\frac{1}{3} + \frac{k}{3} ; - \frac{1}{3 \tau} \right) \mu \left( \frac{u}{\tau}, -\frac{1}{3} + \frac{k}{3} ; -\frac{1}{3 \tau} \right) \\
	& + \frac{e^{\frac{- \pi i \tau}{3}}}{\sqrt{-12 i \tau}} \sum_{k = 0}^{2} e^{ \pi i \left( \frac{-k^2 \tau}{3} + \frac{6uk + 2 \tau k}{3} \right)} \vartheta\left( \frac{ k -1}{3}  ; - \frac{1}{3 \tau} \right)  h(3u+ \tau -k\tau ; 3\tau).
	\end{split}
	\end{equation*}
	We write $A_3(u,-\tau; \tau) = S_1 + S_2$ with
	\begin{equation}\label{Equation: main term for asymptotics}
	S_1 \coloneqq \frac{1}{3 \tau} e^{\frac{\pi i u (3 u + 2 \tau)}{\tau}} \sum_{k = 0}^{2}  \vartheta\left( -\frac{1}{3} + \frac{k}{3} ; - \frac{1}{3 \tau} \right) \mu \left( \frac{u}{\tau}, -\frac{1}{3} + \frac{k}{3} ; -\frac{1}{3 \tau} \right)
	\end{equation}
	and
	\begin{equation*}
	S_2 \coloneqq \frac{e^{\frac{- \pi i \tau}{3}}}{\sqrt{-12 i \tau}} \sum_{k = 0}^{2} e^{ \pi i \left( \frac{-k^2 \tau}{3} + \frac{6uk + 2 \tau k}{3} \right)} \vartheta\left( \frac{ k -1}{3}  ; - \frac{1}{3 \tau} \right)  h(3u+ \tau -k\tau ; 3\tau).
	\end{equation*}
	
	We first investigate the terms from $S_1$. By definition we know that
	\begin{equation*}
	\vartheta(z_2; \tau) \mu(z_1,z_2 ; \tau) = e^{\pi i z_1} \sum_{n \in \Z} \frac{ (-1)^n e^{\pi i (n^2 + n) \tau} e^{2 \pi i n z_2   }}{1 - e^{2 \pi i n \tau} e^{2 \pi i z_1}},
	\end{equation*}
	and so
	\begin{equation*}
	\vartheta\left(z; - \frac{1}{3 \tau}\right) \mu\left(\frac{u}{\tau},z ;- \frac{1}{3 \tau}\right) = e^{ \frac{\pi i u}{\tau}} \sum_{n \in \Z} \frac{ (-1)^n e^{- \frac{\pi i (n^2 + n)}{3 \tau} } e^{2 \pi i n z   }}{1 - e^{- \frac{2 \pi i n}{3 \tau}} e^{\frac{2 \pi i u}{\tau}}} = q_0^{- \frac{u}{2}} \sum_{n \in \Z} \frac{(-1)^n q_0^{ \frac{n^2+n}{6}} \zeta^n }{ 1- q_0^{\frac{n}{3} - u}},
	\end{equation*}
	where $q_0 \coloneqq e^{\frac{- 2 \pi i}{\tau}}$.  Thus, with $z = \frac{k - 1}{3}$, we have
	\begin{equation}\label{Equation: main term asymptotics rearranged}
	S_1 = \frac{1}{3 \tau} q_0^{-\frac{1}{2} (3u^2 + u)} e^{2 \pi i u} \sum_{k = 0}^{2}  \sum_{n \in \Z} \frac{(-1)^n q_0^{ \frac{n^2+n}{6}} \zeta^n }{ 1- q_0^{\frac{n}{3} - u}}.
	\end{equation}
	
	First we check the behaviour of $S_1$ at possible poles. Assume $u =  \frac{1}{3}$, so that the $n = 1$ term has a pole of order one. Then \eqref{Equation: main term for asymptotics} is
	\begin{equation*}
	\frac{1}{3 \tau} \rho q_0^{- \frac{1}{3}} \sum_{k = 0}^{2} \sum_{n \in \Z} \frac{(-1)^n  q_0^{\frac{n^2 + n}{6}} e^{\frac{2 \pi i n (k-1)}{3}}}{1 - q_0^{\frac{n-1}{3}}},
	\end{equation*}
	where $\rho \coloneqq e^{\frac{2 \pi i}{3}}$. The only issues are the $n=1$ terms in this sum, and so we investigate the numerator
	\begin{equation*}
	- \frac{1}{3 \tau} \rho \sum_{k = 0}^{2} e^{\frac{2 \pi i (k-1)}{3}}.
	\end{equation*}
	From here it is clear that we have a zero of order one in the numerator and hence have a removable singularity at $u = \frac{1}{3}$. It is clear that for the $n=1$ terms, the limit as $u$ approaches $\frac{1}{3}$ from both above and below is zero, since the numerator is always zero and the denominator is non-zero away from $u = \frac{1}{3}$.	
	
	Furthermore, it is clear that
	\begin{equation*}
	\sum_{k = 0}^{2} \zeta^n = \begin{dcases}
	3 \text{ if } n \equiv 0 \pmod{3}, \\
	0 \text{ else}.
	\end{dcases}
	\end{equation*}
	Thus \eqref{Equation: main term asymptotics rearranged} is equal to
	\begin{equation*}
	\frac{1}{\tau} q_0^{-\frac{1}{2} (3u^2 + u)} e^{2 \pi i u}  \sum_{n \in \Z} \frac{(-1)^n q_0^{ \frac{3 n^2+n}{2}} }{ 1- q_0^{n - u}}.
	\end{equation*}
	We want to find the lowest power of $q_0$ in this sum, since negative powers of $q_0$ give growing terms in the asymptotic limit. Considering only the inner sum without the prefactor, the $n = 0$ term is 
	\begin{equation*}
	\frac{1}{1 - q_0^{-u}} = \frac{- q_0^{u}}{1 - q_0^{u}} = -q_0^{u} - q_0^{2u} + \dots,
	\end{equation*}
	where we have used that $u \in (0,\frac{1}{2}]$ and $\tau \in \H$. It is clear that any term $n \geq 1$ will have terms of order $q_0^{\frac{5}{2}}$ or higher.
	
	When $n < 0$ we have that $n - u <0$ and hence the term
	\begin{equation*}
	\begin{split}
	\frac{(-1)^n   q_0^{\frac{3n^2 +n}{2}}   }{1- q_0^{n - u}} = \frac{ (-1)^{n+1}   q_0^{\frac{3n^2 +n}{2}}  q_0^{u - n}  }{1 -  q_0^{u - n}} = & (-1)^{n+1}   q_0^{\frac{3n^2 +n}{2}} q_0^{u - n} \sum_{j \geq 0} q_0^{j \left(u - n \right)} \\
	= & (-1)^{n+1} q_0^{\frac{3n^2 - n + 2u}{2}} \sum_{j \geq 0} q_0^{j \left(u - n \right)},
	\end{split}
	\end{equation*}
	with the lowest order term $(-1)^{n+1} q_0^{\frac{3n^2 - n + 2u}{2}} $. We note that $\frac{3n^2 - n + 2u}{2} \geq 2 + u$.

	We then see that, for $u \in (0, \frac{1}{2}]$, the most negative power of $q_0$ is given by the $n=0$ term and is
	\begin{equation}\label{Equation: contribution of Appell function first term}
	\begin{split}
	-\frac{1}{\tau} q_0^{-\frac{1}{2} (3u^2 + u)} e^{2 \pi i u}  q_0^u & = -\frac{1}{\tau} q_0^{-\frac{1}{2} (3u^2 - u)} e^{2 \pi i u}.
	\end{split}
	\end{equation}
	Note in particular that for $0<u\leq \frac{1}{6}$ we have that $3u^2 -u<0$ and so here we have a positive power of $q_0$, hence in this case \eqref{Equation: contribution of Appell function first term} tends to $0$ in our asymptotic limit.
	
	We now investigate the second-smallest power of $q_0$ giving a non-zero contribution to the asymptotic behaviour. This is given by the second term in the $n=0$ expansion, and is
	\begin{equation*}
	-\frac{1}{\tau} q_0^{- \frac{1}{2} (3u^2 +u)} e^{2 \pi i u} q_0^{2u} = - \frac{1}{ \tau} q_0^{- \frac{3}{2} (u^2 - u)} e^{2 \pi i u}.
	\end{equation*}
	Since $u^2 - u = u(u-1) < 0$ the power of $q_0$ is positive and hence this term gives vanishing contribution to the asymptotic behaviour. In a similar way, all further terms give no contribution, since the power of $q_0$ increases as we take $\abs{n}$ larger in \eqref{Equation: main term asymptotics rearranged}.

	Now we look to find the contribution of the error of modularity terms $S_2$ to the asymptotic behaviour of $A_3$. First, we note that the smallest power of $q_0$ appearing in $\vartheta\left( \frac{k - 1}{3}  ; - \frac{1}{3 \tau} \right)$ is given by
	\begin{equation}\label{Equation: smallest power q_0 in theta}
	- i q_0^{\frac{1}{24}} \left( e^{- \frac{\pi i (k-1)}{3} } - e^{\frac{\pi i (k-1)}{3}}  \right).
	\end{equation}
	Using \eqref{Equation: transformation h(z;tau)} we find that
	\begin{equation*}
	\begin{split}
	h(3u+ (1 - k)\tau ; 3\tau) & = \frac{1}{\sqrt{-3i \tau}} e^{\frac{\pi i (3u +(1-k)\tau)^2}{3 \tau}   } h\left( \frac{u}{\tau} + \frac{1 - k}{3} ; -\frac{1}{3 \tau} \right) \\
	& = \frac{1}{\sqrt{-3i \tau}} e^{\frac{\pi i \tau(k-1)^2}{3} + \frac{3 \pi i u^2}{ \tau } + 2 \pi i u(1-k)  } h\left( \frac{u}{\tau} + \frac{1 - k}{3} ; -\frac{1}{3 \tau} \right).
	\end{split}
	\end{equation*}
	Hence we have
	\begin{equation}\label{Equation: error terms with inverted h}
	S_2 = \frac{i}{6 \tau} \sum_{k = 0}^{2} e^{ \pi i \left( 2u + \frac{3u^2}{\tau} \right)} \vartheta\left( \frac{k - 1}{3}  ; - \frac{1}{3 \tau} \right)   h\left( \frac{u}{\tau} + \frac{1 - k}{3} ; -\frac{1}{3 \tau} \right).
	\end{equation}
	
	If $u \leq \frac{1}{6}$ we rewrite
	\begin{equation*}
	h\left( \frac{u}{\tau} + \frac{1 - k}{3} ; -\frac{1}{3 \tau} \right) = h \left( \frac{-3u}{- 3 \tau} + \frac{1-k}{3} ; -\frac{1}{3 \tau}\right).
	\end{equation*}
	Then writing $\tau = \frac{i \varepsilon}{2 \pi}$ we see that Proposition \ref{Proposition: bounding h(z;tau)} with $\kappa = 1$, $z = \frac{3 \varepsilon}{2 \pi}$, $\beta = -3u$, and $\alpha = \frac{1-k}{3}$ gives the bound as $\varepsilon \rightarrow 0^+$ of
	\begin{equation*}
	\abs{h\left( \frac{u}{\tau} + \frac{1 - k}{3} ; -\frac{1}{3 \tau} \right)} \leq \begin{dcases}
	\abs{\sec(- 3 \pi u) }  \left(\frac{2 \pi}{3 \varepsilon} \right)^{- \frac{1}{2}} e^{ - \frac{6 \pi^2 u^2}{\varepsilon} + \frac{(1-k)^2 \varepsilon}{6}    } & \text{ if } - 3u \neq - \frac{1}{2}, \\
	\left( 1 + \left( \frac{2 \pi}{3 \varepsilon} \right)^{-\frac{1}{2}}  \right) e^{- \frac{ \pi^2}{6 \varepsilon}} & \text{ if } -3u = - \frac{1}{2}.
	\end{dcases}
	\end{equation*}
	Combining the above we see that for $u < \frac{1}{6}$ the contribution of $S_2$ to the overall asymptotic behaviour is bounded in modulus by
	\begin{equation*}
	\begin{split}
	\frac{ 2 \pi \abs{\sec(- 3 \pi u) }}{3 \varepsilon} \sum_{k = 0}^{2}    e^{-\frac{\pi^2}{6 \varepsilon}}   \left(\frac{2 \pi}{3 \varepsilon} \right)^{- \frac{1}{2}} e^{ \frac{(1-k)^2 \varepsilon}{6}    } .
	\end{split}
	\end{equation*}
	It is easy to see that as $\varepsilon \rightarrow 0^+$ this contribution vanishes. In a similar way, the contribution from $S_2$ to the overall asymptotics vanishes when $u = \frac{1}{6}$.
	
	We now consider $u > \frac{1}{6}$. In order to apply Proposition \ref{Proposition: bounding h(z;tau)} we need to shift the function $h$. Using \eqref{Equation: shifting h} gives
	\begin{equation*}
	\begin{split}
	h\left( \frac{- 3u}{- 3 \tau} + \frac{1-k}{3}; -\frac{1}{3 \tau}  \right) & = h \left( \frac{- 3u + 1}{- 3 \tau} + \frac{1}{3 \tau} + \frac{1-k}{3}; -\frac{1}{3 \tau}  \right) \\
	& = - e^{ - 2 \pi i \left( \frac{u}{\tau} + \frac{1-k}{3} \right) + \frac{\pi i }{3 \tau} } h\left( \frac{1-3u}{- 3 \tau} + \frac{1-k}{3}; -\frac{1}{3 \tau} \right) + 2e^{- \pi i \left( \frac{u}{\tau} + \frac{1-k}{3}   \right) + \frac{\pi i}{12 \tau}} \\
	& = - e^{ - 2 \pi i \left( \frac{u}{\tau} + \frac{1-k}{3} \right) + \frac{\pi i }{3 \tau} } h\left( \frac{1-3u}{- 3 \tau} + \frac{1-k}{3}; -\frac{1}{3 \tau} \right) + 2 e^{ \frac{\pi i (k-1)}{3} } q_0^{\frac{u}{2} - \frac{1}{24}}.
	\end{split}
	\end{equation*}
	Then we write $S_2 =  S_{2,1} + S_{2,2}$, where
	\begin{equation*}
	S_{2,1} \coloneqq \frac{-i}{6 \tau} \sum_{k = 0}^{2} e^{ \pi i \left( 2u + \frac{3u^2}{\tau} \right)} e^{ - 2 \pi i \left( \frac{u}{\tau} + \frac{1-k}{3} \right) + \frac{\pi i }{3 \tau} } \vartheta\left( \frac{k - 1}{3}  ; - \frac{1}{3 \tau} \right)   h\left( \frac{1-3u}{- 3 \tau} + \frac{1-k}{3}; -\frac{1}{3 \tau} \right)
	\end{equation*}
	and
	\begin{equation*}
	\begin{split}
	S_{2,2} \coloneqq & \frac{i}{3 \tau} \sum_{k = 0}^{2} e^{ \pi i \left( 2u + \frac{3u^2}{\tau} \right)} \vartheta\left( \frac{k - 1}{3}  ; - \frac{1}{3 \tau} \right)  e^{ \frac{\pi i (k-1)}{3} } q_0^{\frac{u}{2} - \frac{1}{24}} \\
	= & \frac{i}{3 \tau} \sum_{k = 0}^{2} e^{2 \pi i u} q_0^{-\frac{3u^2}{2} + \frac{u}{2} - \frac{1}{24}} \vartheta\left( \frac{k - 1}{3}  ; - \frac{1}{3 \tau} \right)  e^{ \frac{\pi i (k-1)}{3} }.
	\end{split}
	\end{equation*}

	We concentrate firstly on $S_{2,1}$. Recalling that $u \leq \frac{1}{2}$ and using Proposition \ref{Proposition: bounding h(z;tau)} with $\kappa = 1$, $z = \frac{3 \varepsilon}{2 \pi}$, $\beta = 1-3u$, and $\alpha = \frac{1-k}{3}$ gives the bound
	\begin{equation*}
	\abs{h\left( \frac{1 - 3u}{- 3 \tau} + \frac{1 - k}{3} ; -\frac{1}{3 \tau} \right)} \leq \abs{\sec(\pi (1- 3u)) }  \left(\frac{2 \pi}{3 \varepsilon} \right)^{- \frac{1}{2}} e^{ - \frac{2 \pi^2 (3u-1)^2}{3 \varepsilon} + \frac{(1-k)^2 \varepsilon}{6}   }.
	\end{equation*}
	Then we see that the contribution of $S_{2,1}$ to the overall asymptotic behaviour is bounded in modulus by
	\begin{equation*}
	\frac{2 \pi \abs{\sec(\pi (1- 3u)) }}{3 \varepsilon} \sum_{k = 0}^{2}     e^{\frac{- \pi^2}{6 \varepsilon}}  \left(\frac{2 \pi}{3 \varepsilon} \right)^{- \frac{1}{2}} e^{ \frac{(1-k)^2 \varepsilon}{6}   } .
	\end{equation*}
	It is easy to see that as $\varepsilon \rightarrow 0^+$ this contribution vanishes.
	We are left to consider the contribution of $S_{2,2}$. Using the behaviour of $\vartheta$ given in \eqref{Equation: smallest power q_0 in theta} the lowest power of $q_0$ arising from this sum is
	\begin{equation*}
	\frac{1}{3 \tau} \sum_{k = 0}^{2} e^{2 \pi i u} q_0^{-\frac{3u^2}{2} + \frac{u}{2} - \frac{1}{24}} q_0^{\frac{1}{24}} \left( 1 - e^{ \frac{2 \pi i (k-1)}{3}} \right)  = \frac{1}{\tau} e^{2 \pi i u} q_0^{-\frac{1}{2} (3u^2 - u)},
	\end{equation*}
	exactly canceling the contribution from the first term of the Appell function given in \eqref{Equation: contribution of Appell function first term}. So, when $u > \frac{1}{6}$ we must investigate the second-largest non-zero term of both the Appell function and $S_{2,2}$, since all terms in $S_{2,1}$ are exponentially suppressed in the limit.
	
	It is easily seen from the definition of $\vartheta$ that the power of $q_0$ in $\vartheta\left( \frac{k-1}{3}; -\frac{1}{3\tau}  \right)$ is greater than or equal to $\frac{1}{24}+\frac{1}{3}$ for other terms. Then the power of $q_0$ in $S_{2,2}$ is seen to be positive, since $-\frac{1}{2} (3u^2 -u) \geq -\frac{1}{8}$ for $\frac{1}{6} < u \leq \frac{1}{2}$. Hence these terms give no contribution in the limiting situation. 
	Further, we have already seen that there are no other non-vanishing contributions from \eqref{Equation: main term asymptotics rearranged}. The claimed result now follows.
\end{proof}

\section{Proof of Theorem \ref{Theorem main}}\label{Section: proof of main theorem}
In this section we prove the following.

\theoremMain*

\begin{proof}
	
	From Theorem \ref{Theorem: N(r,t;n) weakly increasing} we know that the power series
	\begin{equation*}
	\sum_{n \geq M} N(r,t;n) q^n
	\end{equation*}
	has weakly increasing coefficients. We are therefore in the situation where we may apply Theorem \ref{Theorem: Ingham's Tauberian}, and so we investigate the asymptotic behaviour
	\begin{equation*}
	\lim\limits_{\varepsilon \rightarrow 0^+} \sum_{n \geq 1} N(r,t;n) e^{- \varepsilon n}.
	\end{equation*}
	Using \eqref{Proposition: rewriting N(r,t;n) as p(n) + N(m,n)} and the fact that $R(\zeta;q) = R(\zeta^{-1};q)$ we have that
	\begin{equation*}
	\sum_{n \geq 0} N(r,t;n)q^n = \frac{1}{t} \left[ \sum_{n = 0}^{\infty} p(n) q^n + \sum_{j = 1}^{\lfloor{\frac{t-1}{2}} \rfloor} \left( \zeta_t^{rj} + \zeta_t^{-rj} \right) R(\zeta_t^j ; q)  + \delta_{t} (-1)^r R(-1;q) \right],
	\end{equation*}
	where $\delta_t \coloneqq 1$ if $t$ is even, and $0$ otherwise.
	We next note that it is possible to rewrite
	\begin{equation*}
	\begin{split}
	R(\zeta; q) = \frac{(1- \zeta)}{(q)_\infty} \sum_{n= -\infty}^{\infty} \frac{ (-1)^n  q^{\frac{n(3n + 1)}{2}} }{1 - \zeta q^n} & = (1- \zeta) \phi(\tau)^{-1} \sum_{n= -\infty}^{\infty} \frac{ (-1)^n  q^{\frac{3n(n + 1)}{2}} q^{-n} }{1 - \zeta q^n} \\
	& = \left( \zeta^{-\frac{3}{2}} - \zeta^{- \frac{1}{2}}  \right) \frac{1}{\phi(\tau)} A_3 (z, -\tau; \tau),
	\end{split}
	\end{equation*}
	where $\phi(\tau) \coloneqq \prod_{n \geq 1} (1-q^n)$.
	
	Considering generating functions we therefore want to investigate the behaviour of
	\begin{equation}\label{Equation: Generating function of rank mod t}
	\frac{1}{t \phi(\tau)} \left[ 1 + \sum_{j = 1}^{\lfloor{\frac{t-1}{2}} \rfloor}  \left( \zeta_t^{rj} + \zeta_t^{-rj} \right) \left( \zeta_{2t}^{-3j} - \zeta_{2t}^{-j}  \right) A_3 \left(\frac{j}{t}, -\tau ; \tau \right) + 2 i \delta_{t} (-1)^r   A_3 \left(\frac{1}{2}, -\tau ; \tau \right)  \right].
	\end{equation}
	Let $\tau = \frac{i \varepsilon}{2 \pi}$ and consider $\varepsilon \rightarrow 0^+$. We use Theorem \ref{Theorem: asymptotics of A_3} with $u = \frac{j}{t}$ and see that the term in square brackets is asymptotically equal to $1$ in this limit. Hence we have that \eqref{Equation: Generating function of rank mod t} behaves as
	\begin{equation*}
	\frac{1}{t \phi\left( \frac{i \varepsilon}{2 \pi} \right)} \sim \frac{1}{t \sqrt{2 \pi}} \varepsilon^{\frac{1}{2}} e^{\frac{\pi^2}{6 \varepsilon}}.
	\end{equation*}
	Then using Theorem \ref{Theorem: Ingham's Tauberian} we see that as $n \rightarrow \infty$
	\begin{equation*}
	N(r,t;n) \sim \frac{1}{t} p(n) \sim \frac{1}{4 tn \sqrt{3}} e^{2 \pi \sqrt{\frac{n}{6}}}. 
	\end{equation*}
	The claim now follows.
\end{proof}

\section{Proof of Theorem \ref{Theorem: conjecture is true}}\label{Section: proof of conjecture}

As a simple application of Theorem \ref{Theorem main} we prove the following theorem.

\theoremConjecture*

\begin{proof}
	Consider the ratio
	\begin{equation*}
	\frac{N(r,t;a) N(r,t;b)}{N(r,t;a+b)}
	\end{equation*}
	as $a,b \rightarrow \infty$. By Theorem \ref{Theorem main} we have
	\begin{equation*}
	\begin{split}
	\frac{N(r,t;a) N(r,t;b)}{N(r,t;a+b)} \sim & \frac{\frac{1}{4 ta \sqrt{3}} e^{2 \pi \sqrt{\frac{a}{6}}}\frac{1}{4 tb \sqrt{3}} e^{2 \pi \sqrt{\frac{b}{6}}}}{\frac{1}{4 t(a+b) \sqrt{3}} e^{2 \pi \sqrt{\frac{(a+b)}{6}}}} = \frac{(4ta \sqrt{3} + 4tb \sqrt{3})}{48 t^2 ab } \frac{e^{2 \pi   \sqrt{\frac{a}{6}} \sqrt{\frac{b}{6}} }}{ e^{2 \pi \sqrt{\frac{(a+b)}{6}}}} > 1
	\end{split}
	\end{equation*} 
	as $a,b \rightarrow \infty$.
\end{proof}

\bibliographystyle{amsplain}
\bibliography{bib}

\end{document}